\newcommand\R{\mathbf R}
\newcommand\C{\mathbf C}
\newcommand\LA{{\mathsf A}}
\newcommand\LB{{\mathsf B}}
\newcommand\LC{{\mathsf C}}
\newcommand\LD{{\mathsf D}}
\newcommand\LE{{\mathsf E}}
\newcommand\LF{{\mathsf F}}
\newcommand\LG{{\mathsf G}}
\newcommand\La{\mathfrak a}
\newcommand\SO{\operatorname{\sf SO}}
\newcommand\Sp{\operatorname{\sf Sp}}
\newcommand\SU{\operatorname{\sf SU}}
\newcommand\Spin{\operatorname{\sf Spin}}
\newcommand\U{\operatorname{\sf U}}
\newcommand\so{{\mathfrak{so}}}
\newcommand\su{{\mathfrak{su}}}
\renewcommand\sp{{\mathfrak{sp}}}
\renewcommand\u{{\mathfrak{u}}}
\renewcommand\k{{\mathfrak{k}}}
\renewcommand\t{{\mathfrak{t}}}
\newcommand\fsp{{\mathfrak{sp}}}
\newcommand\z{{\mathfrak{z}}}
\newcommand\op{{\oplus}}
\newcommand\eS{\operatorname{\sf S}}
\newcommand\RR{{\mathcal R}}
\newcommand\PP{{\mathcal P}}
\newcommand\g{{\mathfrak{g}}}
\newcommand\h{{\mathfrak{h}}}
\renewcommand\l{{\mathfrak{l}}}
\newcommand\m{{\mathfrak{m}}}
\newcommand\p{{\mathfrak{p}}}
\newcommand\s{{\mathfrak{s}}}
\newcommand\im{{\operatorname{im}}}
\newcommand\id{\operatorname{id}}
\newcommand\tr{\operatorname{tr}}
\newcommand\Aut{\operatorname{Aut}}
\newcommand\diag{\operatorname{diag}}
\newcommand\Ad{\operatorname{Ad}}
\newcommand\ad{\operatorname{ad}}
\newcommand\Lie{\operatorname{Lie}}
\newcommand\rk{\operatorname{rk}}
\theoremstyle{plain}
\newtheorem{theorem}{Theorem}
\newtheorem{lemma}[theorem]{Lemma}
\newtheorem{proposition}[theorem]{Proposition}
\newtheorem{corollary}[theorem]{Corollary}
\theoremstyle{definition}
\theoremstyle{remark}
\newtheorem{remark}[theorem]{Remark}
\numberwithin{equation}{section}
\numberwithin{theorem}{section}
\begin{document}
\title[Nonnegatively curved homogeneous metrics]
{Nonnegatively curved homogeneous metrics obtained by scaling fibers of
submersions}

\subjclass[2000]{Primary: 53C30; Secondary: 53C21; 57S15}

\author[M. M. Kerr]{Megan M. Kerr}
\address{Department of Mathematics, Wellesley College, 106 Central St.,
Wellesley, MA 02481} \email{mkerr@wellesley.edu}

\author[A. Kollross]{Andreas Kollross}
\address{Fachbereich Mathematik, Universit\"{a}t Stuttgart,
Pfaffenwaldring 57, D-70569 Stuttgart}
\email{kollross@mathematik.uni-stuttgart.de}
\date{\today}

\begin{abstract}
We consider invariant Riemannian metrics on compact homogeneous spaces $G/H$
where an intermediate subgroup $K$ between $G$ and $H$ exists, so that
the homogeneous space $G/H$ is the total space of a Riemannian submersion. We
study the question as to whether enlarging the fibers of the submersion by a
constant scaling factor retains the nonnegative curvature in the case that the
deformation starts at a normal homogeneous metric. We classify triples of
groups $(H,K,G)$ where nonnegative curvature is maintained for small
deformations, using a criterion proved by Schwachh\"{o}fer and Tapp. We obtain a
complete classification in case the subgroup $H$ has full rank and an almost
complete classification in the case of regular subgroups.
\end{abstract}
\maketitle


\section{Introduction}
The study of manifolds with nonnegative or positive sectional curvature is one
of the classical fields of Riemannian geometry. Examples of manifolds which
admit metrics of strictly positive curvature are scarce, but if one relaxes the
curvature condition to include manifolds of nonnegative curvature, the
situation is different.
For instance, any compact
homogeneous space admits a metric of nonnegative sectional curvature.

The class of manifolds supporting Riemannian metrics of nonnegative curvature
is larger; nevertheless, only a few methods are known and it is also of
interest to explore the family of all Riemannian metrics with nonnegative
curvature on a given manifold. Some results in this connection were recently
obtained by Schwachh\"{o}fer and Tapp~\cite{ST} in the setting of compact
homogeneous spaces. They investigate certain deformations of a normal
homogeneous metric on a compact homogeneous space~$G/H$ within the class of
$G$-invariant metrics.

They prove the following structural result. The family of invariant metrics is
star-shaped with respect to any normal homogeneous metric if the symmetric
matrices corresponding to invariant metrics are parametrized by their inverses.
Thus the problem of determining all invariant metrics with nonnegative
curvature reduces to determining how long nonnegative curvature is maintained
when deforming along a linear path, starting at a normal homogeneous metric.

Schwachh\"{o}fer and Tapp investigate this problem for the simplest nontrivial
case, namely they assume there is an intermediate subgroup~$K$ between $H$ and
$G$ and study metrics obtained through shrinking or enlarging the fibers of the
Riemannian submersion $G/H \to G/K$ by a constant scaling factor. As they point
out, a metric obtained by shrinking the fibers can be interpreted as a
submersion metric obtained via a Cheeger deformation~\cite{Ch} and hence
shrinking the fibers always preserves nonnegative curvature. On the other hand,
if one enlarges the fibers by a constant scaling factor, whether nonnegative
curvature is maintained for small deformations depends on the triple $(H,K,G)$.
In~\cite{ST} they find a criterion on the Lie algebra level, see
Theorem~\ref{ThmST} below.
The preservation of nonnegative curvature under scaling up first appears  in
Grove and Ziller's paper on Milnor spheres \cite{GZ}.

This condition holds in particular if $(K,H)$ is a symmetric pair, an
observation which yields a new class of examples for nonnegatively curved
metrics. To study the Lie-theoretic condition found by Schwachh\"{o}fer and Tapp is
interesting in its own right and determining which triples $(H,K,G)$ of compact
Lie groups satisfy the criterion turns out to be an intriguing problem. In
\cite{kk} we classify all such triples in the special case where $G$ is simple
of dimension up to~15. In the present article, we use root space decompositions
to study the problem for three classes of examples.

The first class consists of triples $(H,K,G)$ where $G/K$ is a symmetric space
with $\rk(G/K) = \rk(G)$ and $H$ arises as the intersection of~$K$ with a
subgroup of maximal rank in~$G$. The second class consists of all triples
$(H,K,G)$ for which $\rk(H)=\rk(K)=\rk(G)$. For those two classes of spaces we
prove that enlarging the fibers of the submersion $G/H \to G/K$ maintains
nonnegative curvature if and only if $(K,H)$ is a symmetric pair. In the last
part of this article we consider more generally the criterion for triples
$(H,K,G)$ where $H, K$ are regular subgroups of~$G$. Here there are also
examples satisfying the criterion such that $(K,H)$ is not a symmetric pair.
Some of these examples were known before by work of \cite{ST}, but we also
present a new class of examples, see Section~\ref{NewExamples}. We obtain an
almost complete classification in this case.

It is a pleasure to thank the referee for helpful suggestions.

\section{Preliminaries}
\label{Prelim}

Let $H \subsetneq K \subsetneq G$ be compact Lie groups and let $\h, \k, \g$,
be their respective Lie algebras. Let $g_0$ be a biinvariant inner product
on~$\g$. Let $\s$ be the complement of~$\k$ in~$\g$ and let $\m$ be the
complement of~$\h$ in~$\k$ such that $\g = \h \oplus \m \oplus \s$ is an
orthogonal decomposition with respect to $g_0$. Set $\p = \m \oplus \s$. Then
$\p$ can be identified with the tangent space of the homogeneous manifold $G/H$
at the point $1H$. The restriction of $g_0$ to~$\p$ defines an
$\Ad_H$-invariant inner product and thus a $G$-invariant Riemannian metric
on~$G/H$, which we will also denote by $g_0$.

For any element $X$ in $\p$, we write $X = X^{\m} + X^{\s}$, where $X^{\m} \in
\m$ denotes the vertical component and $X^{\s} \in \s$ denotes the horizontal
component of~$X$. We study $G$-invariant metrics on $G/H$ which are obtained by
deforming the Riemannian metric~$g_0$ on $G/H$ such that the length of vectors
which are tangent to fibers of the Riemannian submersion
\begin{equation}\label{fibration}
G/H \to G/K
\end{equation}
is scaled by a constant and the length of vectors normal to the fibers is 
unchanged. That is, we consider the 
 one-parameter family of Riemannian metrics~$g_t$ on~$G/H$, where
$t \in (-\infty,1)$,
\begin{equation}\label{deformation}
g_t (X,Y) :=  \frac{1}{1-t} \cdot g_0(X^{\m},Y^{\m}) + g_0(X^{\s},Y^{\s}).
\end{equation}
It is well known that the normal homogeneous metric~$g_0$ has nonnegative
curvature. As Schwachh\"{o}fer and Tapp point out in~\cite{ST}, a metric $g_t$
where $t<0$ can be reinterpreted as a submersion metric using Cheeger's
construction~\cite{Ch} in the following way, see also~\cite[Section~2]{Z1}.
Assume $G$ is equipped with the biinvariant metric~$g_0$ and the homogeneous
space~$K/H$ is equipped with the metric $\lambda^2 \cdot \left. g_0 \right|_{\k
\times \k}$. Then an isometric action of~$K$ on $G \times K/H$ is defined by $k
\cdot (g, aH) = (gk^{-1},kaH)$. The quotient by this action is diffeomorphic
to~$G/H$ and the submersion metric corresponds to (\ref{deformation}) with $t =
-1/\lambda^2$. Since the metric on the total space of this submersion has
nonnegative curvature, it follows from O'Neill's formula that the metric on the
base space has nonnegative curvature.

However, metrics $g_t$ with $t>0$ do not have an such an interpretation and
whether they have nonnegative curvature depends on the triple $(H,K,G)$.
Schwachh\"{o}fer and Tapp prove the following condition on the Lie algebra level.

\begin{theorem}{\rm \cite{ST}}\label{ThmST}
\begin{enumerate}

\item[(1)] The metric $g_t$ has nonnegative curvature for small $t > 0$ if
    and only if there exists some $C > 0$ such that for all X and Y in
    $\p$,
\begin{equation}\label{condition}
\left|[X^{\m}, Y^{\m}]^{\m}\right| \leq C\left|[X,Y]\right|.
\end{equation}

\item[(2)] In particular, if $(K, H)$ is a symmetric pair, then $g_t$ has
    nonnegative curvature for small $t > 0$, and in fact for all $t \in
    (-\infty,1/4]$.

\end{enumerate}
\end{theorem}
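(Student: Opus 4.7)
The plan is to write down the sectional curvature of $g_t$ on a $g_0$-orthonormal pair $X, Y \in \p$ as an explicit function of $t$, using a Püttmann-style formula for invariant Riemannian metrics on $G/H$. Since the deformation $g_t$ modifies $g_0$ only in the $\m$ direction by the scalar $\tfrac{1}{1-t}$, the curvature $K_t(X,Y)$ admits an expansion
\[
K_t(X,Y) \;=\; K_0(X,Y) \;+\; t\,F(X,Y) \;+\; t^2\,E(X,Y) \;+\; \cdots,
\]
with $K_0(X,Y) = |[X,Y]^\h|^2 + \tfrac14|[X,Y]^\p|^2 \geq 0$ the normal homogeneous curvature and the subsequent coefficients explicit polynomials in the brackets of the $\h,\m,\s$-components of $X$ and $Y$.

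The next step is to split cases by the size of $|[X,Y]|$. On the compact set of $g_0$-orthonormal pairs with $|[X,Y]| \geq \delta$, the coefficient $K_0$ is uniformly strictly positive, so for each $\delta > 0$ there exists $t_0(\delta) > 0$ with $K_t(X,Y) > 0$ whenever $|t| < t_0(\delta)$. Hence the delicate regime is that of pairs with $|[X,Y]|$ small; the criterion must be extracted from the asymptotics of $K_t$ there. I would expand at a pair with $[X,Y] = 0$ exactly, where both $[X,Y]^\h$ and $[X,Y]^\p$ vanish and $K_0 = 0$; the expansion then reduces to a leading term proportional to $-\bigl|[X^\m,Y^\m]^\m\bigr|^2 \cdot t$, plus higher-order corrections. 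Thus $K_t(X,Y) < 0$ for small $t > 0$ unless $[X^\m,Y^\m]^\m = 0$ whenever $[X,Y] = 0$. Passing to a sequence $(X_n,Y_n)$ with $[X_n,Y_n] \to 0$ and $|[X_n^\m,Y_n^\m]^\m|$ not controlled by $|[X_n,Y_n]|$, a homogeneity/rescaling argument yields pairs on which the negative term dominates; conversely, if $|[X^\m,Y^\m]^\m| \leq C|[X,Y]|$, then a quantitative comparison shows that the negative leading term is absorbed by the positive baseline $|[X,Y]^\h|^2 + \tfrac14|[X,Y]^\p|^2$, giving $K_t \geq 0$ uniformly for $t$ in an interval $(0, t_0)$ depending on $C$ and the structure constants.

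Part~(2) is then immediate: the symmetric-pair hypothesis $[\m,\m] \subset \h$ forces $[X^\m,Y^\m]^\m \equiv 0$, so (\ref{condition}) holds with any $C > 0$. To obtain the explicit range $t \leq 1/4$ one substitutes $[\m,\m]\subset\h$ into the full curvature formula; the surviving $t$-dependence consolidates, after completing the square, into a sum of a $(1-4t)$-weighted square and a manifestly nonnegative remainder, identifying $t = 1/4$ as the sharp cutoff. The main obstacle I expect is the bookkeeping in the curvature formula at degenerate planes: one must carefully isolate which combinations of brackets of $\m$- and $\s$-components persist at leading order in $t$ when $[X,Y] = 0$, and match them against the bound, typically invoking the Jacobi identity to re-express mixed brackets like $[X^\m, Y^\s]$ and $[X^\s, Y^\m]$ in terms of the full commutator $[X,Y]$.
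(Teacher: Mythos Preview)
This theorem is not proved in the present paper at all: it is quoted verbatim from Schwachh\"ofer--Tapp~\cite{ST}, and the only argument the paper supplies is the one-line remark immediately following the statement, namely that the first assertion in~(2) is an immediate consequence of~(1) because the symmetric-pair hypothesis $[\m,\m]\subseteq\h$ forces $[X^\m,Y^\m]^\m=0$ identically, so condition~(\ref{condition}) holds trivially. The explicit bound $t\le 1/4$ and the equivalence in~(1) are simply attributed to~\cite{ST} without reproduction.

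Your outline is a reasonable sketch of how the argument in~\cite{ST} actually proceeds (P\"uttmann-type curvature formula, compactness on the unit-sphere of orthonormal pairs, isolation of the degenerate locus $[X,Y]=0$, and a completing-the-square computation for the $1/4$ bound), and the observation you make for the first half of~(2) coincides exactly with the paper's remark. But since the paper offers no proof beyond that remark, there is nothing further to compare: your proposal goes well beyond what the present paper does, and any detailed assessment of the analytic steps (in particular the claim that the leading $t$-coefficient at a zero-bracket plane is exactly $-|[X^\m,Y^\m]^\m|^2$, and the rescaling/limiting argument for the converse direction) would have to be checked against~\cite{ST} rather than against this paper.
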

The first part of~(2) follows immediately from the observation that
condition~(\ref{condition}) always holds when $[\m,\m] \subseteq \h$, in which
case the left hand side of the inequality is obviously zero for all $X$ and $Y$
in~$\p$. In particular, (\ref{condition}) holds if $\k$ is abelian.

When $H$ is trivial the submersion~(\ref{fibration}) becomes $K \to G \to G/K$
and $g_t$ is in fact a left-invariant metric on $G$.  In this case,
Schwachh\"ofer \cite{Sch} proved $g_t$ has nonnegative curvature for small $t
> 0$ only if the semisimple part of $\k$ is an ideal of $\g$. In particular,
when $\g$ is simple and $\k$ is nonabelian, one does not get nonnegative
curvature by enlarging the fibers.

Here, a pair~$(K,H)$ of compact Lie groups $H\subseteq K$ is said to be a
\emph{symmetric pair} if there exists an automorphism~$\sigma$ of the Lie
algebra $\k$ with $\sigma^2=\id_\k$  whose fixed point set is~$\h$. In this
case we also say that $(\k,\h)$ is a \emph{symmetric pair (of Lie algebras)}.

It is shown in~\cite{ST} that a number of examples $(H,K,G)$ satisfy the
hypothesis of Theorem~\ref{ThmST}. Among them are the following chains $H
\subset K \subset G$, where $[\m,\m] \not\subseteq \h$:
\begin{itemize}
\item $\SU(3) \subset\SU(4) \cong \Spin(6) \subset \Spin(7)$,
\item $\LG_2 \subset \Spin (7) \subset \Spin(p+8)$, where $p \in \{0,1\}$,
    and
\item $\SU(2) \subset \SO(4) \subset \LG_2$, where $\SU(2)$ is contained in
    $\SU(3) \subset \LG_2$.
\end{itemize}

\emph{Notation: } We will denote the Lie algebra of the compact Lie group of
exceptional type $\LG_2$ by $\Lie( \LG_2 )$ in order to avoid confusion with
the notation $\g_1, \g_2, \ldots, \g_m$ in Theorem~\ref{ThmRegular}, by which
we denote the simple ideals of a Lie algebra~$\g$.


\section{A special class of Lie group triples}\label{AClassEx}

In what follows, we will review some general facts about the structure of
\emph{real} simple Lie algebras. Let $T$ be a maximal torus of~$G$ and let $\t$
be its corresponding Lie subalgebra. Consider the adjoint representation $\Ad
\colon G \to \Aut(\g)$ of~$G$ restricted to~$T$. As a $T$-module, the Lie
algebra~$\g$ decomposes into the trivial module~$\t$ and its $g_0$-orthogonal
complement~$\t^{\perp}$, which further decomposes into a sum of pairwise
inequivalent two-dimensional irreducible representations~$\g_{\alpha}^{\R}$
of~$T$ such that for  each element $H \in \t$, $\Ad(\exp(H))$ is the action on
$\g_{\alpha}^{\R}$ by a rotation of the form
\[
\begin{pmatrix}
  \cos( \alpha(H)) &  -\sin( \alpha(H)) \\
  \sin( \alpha(H)) & \;\;\; \cos( \alpha(H)) \\
\end{pmatrix}
\]
with respect to a suitable basis. Via $\Ad(\exp(H)) = \exp(\ad_H)$ we pass to
the Lie algebra level; for each $\g_{\alpha}^{\R}$, we may choose a
$g_0$-orthonormal basis $(X_{\alpha},Y_{\alpha})$ such that
\[
 \ad_H (X_{\alpha}) =  \alpha(H) Y_{\alpha}, \qquad
 \ad_H (Y_{\alpha}) =  -\alpha(H) X_{\alpha}.
\]
Note that $\g_{\alpha}^{\R} = \g_{-\alpha}^{\R}$, in fact, replacing the
ordered basis $(X_{\alpha},Y_{\alpha})$ by $(Y_{\alpha},X_{\alpha})$ reverses
the rotation. Apart from this ambiguity, $\alpha$ is uniquely determined by the
invariant subspace~$\g_{\alpha}^{\R}$. We  refer to the spaces
$\g_{\alpha}^{\R}$ as the \emph{real root spaces} of~$\g$. Let $R$ be the set
of nonzero elements $\alpha \in \t^*$ such that there exists a nontrivial
$T$-module $\g_{\alpha}^{\R}$ in $\t^{\perp}$. Now choose a vector $v \in \t$
such that $\alpha(v) \neq 0$ for all $\alpha \in R$. Let $R_+ := \{ \alpha \in
R \mid \alpha(v) > 0 \}$. We say $R_+$ is the set of \emph{positive roots}
of~$\g$ and $R = R_+ \cup -R_+$ is the set of \emph{roots}. Let $\g^{\C}= \g
\otimes \C$ denote the complexified Lie algebra. For any $\alpha \in \t^*$ one
defines
\[
\g^{\C}_{\alpha} =
 \left \{ X \in \g^{\C} \mid \ad_H (X) = i \alpha(H) X \mbox{ for all } H \in \t \right \}.
\]
Whenever $\g^{\C}_{\alpha} \neq 0$, we say $\g^{\C}_{\alpha}$ is a \emph{root
space} of~$\g$. For each $\alpha \in R_+$, define the following elements of
$\g^{\C}$:
\[
E_{ \alpha} = X_{\alpha} - i Y_{\alpha}, \qquad
E_{-\alpha} = X_{\alpha} + i Y_{\alpha},
\]
so that we have $\ad_H (E_{\pm \alpha}) = i\alpha(H) E_{\pm \alpha}$. In this
way we obtain the \emph{root space decomposition}
\[
\g^{\C} = \left( \t \otimes \C \right) \oplus \bigoplus_{\alpha \in R} \g^{\C}_{\alpha}.
\]
 Of course $\g^{\C}_{\alpha} \neq 0$ if and only if $\alpha \in R$. In this case
 $\g^{\C}_{\alpha} = \C E_{\alpha}$.
\begin{remark}
For each $\alpha$, the basis vectors $X_{\alpha},Y_{\alpha}$ are 
determined only up to rotation: for any $t \in \R$  we may replace $E_{\alpha}$ and
$E_{-\alpha}$  by $e^{it}E_{\alpha}$ and $e^{-it}E_{-\alpha}$, which amounts to
replacing $X_\alpha$ and $Y_\alpha$ by
\begin{align}\label{EqRotateXY}
   \cos(t) X_{\alpha} + \sin(t) Y_{\alpha}  \quad {\rm and} \quad
  - \sin(t) X_{\alpha} + \cos(t) Y_{\alpha}.
\end{align} \end{remark}

It follows from the Jacobi identity that $ [ \g_{\alpha}^{\C}, \g_{\beta}^{\C}]
\subseteq \g_{\alpha + \beta}^{\C}$ for all $\alpha,\beta \in \t^*$. In
particular, if $\alpha + \beta$ is not a root, $[ \g_{\alpha}^{\C},
\g_{\beta}^{\C} ]=0$. On the other hand, we have $[ \g_{\alpha}^{\C},
\g_{\beta}^{\C} ] = \g_{\alpha + \beta}^{\C}$ whenever $\alpha + \beta \neq 0$
\cite[Thm.~4.3, Ch.~III]{Hel}.  We define  the number $N_{\alpha,\beta}$ by $[
E_{\alpha}, E_{\beta} ] = N_{\alpha,\beta} E_{\alpha + \beta}$ if $\alpha,
\beta, \alpha+\beta \in R$.

Let $\Gamma$ be a subset of $R$. Then $\Gamma$ is said to be \emph{symmetric}
if whenever $\alpha \in \Gamma$, then $-\alpha \in \Gamma$. The set $\Gamma$ is
said to be \emph{closed} if whenever $\alpha,\beta \in \Gamma$ and
$\alpha+\beta\in R$, then $\alpha+\beta \in \Gamma$.

\bigskip


We will now consider examples which arise from a special class of compact
symmetric spaces~$G/K$, namely those for which $\rk(G/K) = \rk(G)$, i.e.\ $\s$
contains a maximal abelian subalgebra of~$\g$. The simply connected irreducible
spaces of this type are $\SU(n) / \SO(n)$, $\SO(2n{+}1) / \SO(n{+}1) \times
\SO(n)$, $\SO(2n) / \SO(n) \times \SO(n)$, $\Sp(n) / \U(n)$, $\LE_6 / \Sp(4)$,
$\LE_7 / \SU(8)$, $\LE_8 / \SO'(16)$, $\LF_4 / \Sp(3)\Sp(1)$, $\LG_2 / \SO(4)$,
cf.\ \cite[Table V, Ch.X]{Hel}. For the spaces in this class, the Satake
diagram of~$G/K$ is the same as the Dynkin diagram of~$G$, but with uniform
multiplicity one. The corresponding involution~$\sigma$ on~$\g$ induces an
involution of~$\g^{\C}$ which acts as minus identity on~$\t \otimes \C$ and
sends each root to its negative. Furthermore, every real root space
$\g_{\alpha}^{\R}$ is $\sigma$-invariant,
 with a one-dimensional fixed point set. For each basis $(X_{\alpha},Y_{\alpha})$
 we have
\begin{equation}\label{Sigma}
\sigma(X_{\alpha}) = X_{\alpha}, \quad
\sigma(Y_{\alpha}) = - Y_{\alpha}
\end{equation}
after an appropriate rotation as in (\ref{EqRotateXY}). In particular, for the
subalgebra~$\k$, $\k={\rm span}\{X_{\alpha} \mid \alpha \in R_+\}$ and
for its 
complement, 
$\s = \t \op {\rm span}\{Y_{\alpha}\mid \alpha \in R_+\}$.

Assume $\h$ is spanned by a subset of $\{X_{\alpha}\,|\,\alpha \in R_+\}$ and
$(K,H)$ is not a symmetric pair. Then the $g_0$-orthogonal complement~$\m$
of~$\h$ in~$\k$ contains two elements of the form $X_{\lambda}$, $X_{\mu}$ such
that the $\m$-component of their bracket $[X_{\lambda},X_{\mu}]^{\m}$ is
nonzero (this implies $\lambda+\mu\neq 0$). Since $[X_{\lambda},X_{\mu}] \in
\g_{\lambda+\mu}\op \g_{\lambda-\mu} \op \g_{\mu-\lambda} \op
\g_{-\lambda-\mu}$, we may assume (possibly after interchanging $\lambda$ and
$\mu$) that at least one of $\lambda\pm\mu$ is a positive root of $\g$ for
which $X_{\lambda\pm\mu}\in\m$. Let $\nu=\lambda\pm\mu$, where the sign is
chosen such that $\nu \in R_+$ and $X_{\nu} \in \m$. The set of roots which
appear as nonzero linear combinations of $\lambda$ and $\mu$ with integral
coefficients forms an irreducible rank two root system~$R(\lambda,\mu)$ which
is of type $\LA_2$ or $\LB_2$. (No root system of type~$\LG_2$ ever occurs as a
proper subset of an irreducible root system.)

We will now show that we may choose two linearly independent elements
$\alpha,\beta \in \{\lambda, \mu, \nu\}$ such that $\alpha-\beta\in R_+$ and
neither $\alpha+\beta$ nor $2\alpha-\beta$ is a root. In case $R(\lambda,\mu)$
is of type $\LA_2$, choose $\alpha := \lambda$, $\beta := \mu$ if $\nu =
\lambda-\mu$ and choose $\alpha := \nu$, $\beta := \mu$ if $\nu = \lambda+\mu$.
Assume $R(\lambda,\mu)$ is of type $\LB_2$ and $\nu = \lambda + \mu$. Then
either $\lambda$ and $\mu$ are orthogonal short roots, in which case we set
$\alpha := \nu$, $\beta := \mu$, or $\lambda$ and $\mu$ are of different length
and enclose an angle of~$\frac{3\pi}4$, in which case we choose $\alpha := \nu$
and $\beta \in \{\lambda, \mu\}$ to be the long root.

Now assume $R(\lambda,\mu)$ is of type $\LB_2$ and $\nu = \lambda - \mu$. Then
either $\lambda$ and $\mu$ are orthogonal short roots, in which case we set
$\alpha := \lambda$, $\beta := \nu$, or $\lambda$ and $\mu$ are of different
length and enclose an angle of~$\frac\pi4$, in which case we choose $\alpha \in
\{\lambda,\mu\}$ to be the long root and $\beta := \nu$.

For any $H \in \t$ and for any real constant~$\eta$, define $X = X_{\alpha} +
H$ and $Y = X_{\beta} + \eta Y_{\alpha-\beta}$. We show that $[X^\m,Y^\m]\neq
0$, and  $[X,Y]=0.$  We recall,
\[
[H,X_{\beta}] = \beta(H) Y_{\beta}, \quad
[H,Y_{\alpha-\beta}] = -(\alpha(H)-\beta(H)) X_{\alpha-\beta}.
\]
Furthermore, using the fact that $\alpha+\beta$ is not a root of~$\g$, we
obtain
\begin{align*}
4[X_{\alpha},X_{\beta}]
 &= [E_{\alpha} + E_{-\alpha}, E_{\beta} + E_{-\beta}]
 = N_{\alpha,-\beta} E_{\alpha-\beta} + N_{-\alpha,\beta} E_{\beta-\alpha}\\
 &= N_{\alpha,-\beta} (X_{\alpha-\beta}-iY_{\alpha-\beta})
  +N_{-\alpha,\beta} (X_{\alpha-\beta}+iY_{\alpha-\beta}).
\end{align*}
It follows that $N_{\alpha,-\beta} = N_{-\alpha,\beta} \in \R \setminus \{0\}$
and we have shown that
\[
[X^\m,Y^\m]=[X_{\alpha},X_{\beta}] = \frac12 N_{\alpha,-\beta} X_{\alpha-\beta}.
\]
Similarly, we calculate, using the fact that $2\alpha-\beta$ is not a root
of~$\g$,
\begin{align*}
4[X_{\alpha},Y_{\alpha-\beta}]
 &= i[E_{\alpha} + E_{-\alpha}, E_{\alpha-\beta} - E_{\beta-\alpha}]
 = -iN_{\alpha,\beta-\alpha} E_{\beta} + iN_{-\alpha,\alpha-\beta} E_{-\beta}\\
 &= -N_{\alpha,\beta-\alpha} (iX_{\beta}+Y_{\beta})
  +N_{-\alpha,\alpha-\beta} (iX_{\beta}-Y_{\beta}).
\end{align*}
Since $[X_{\alpha},Y_{\alpha-\beta}] \in \s$ it follows that
$N_{\alpha,\beta-\alpha}=  N_{-\alpha,\alpha-\beta} \in \R \setminus \{0\}$. We
have shown
\[
[X_{\alpha},Y_{\alpha-\beta}] =  -\frac12 N_{\alpha,\beta-\alpha} Y_{\beta}.
\]
Thus we have
\begin{align*}
[X,Y]
 &= [X_{\alpha} +  H,X_{\beta} + \eta Y_{\alpha-\beta}]=\\
 &= \frac12 N_{\alpha,-\beta} X_{\alpha-\beta}  -\frac12 \eta N_{\alpha,\beta-\alpha}
   Y_{\beta} + \beta(H) Y_{\beta} - \eta (\alpha(H)-\beta(H)) X_{\alpha-\beta}.
\end{align*}
Since $\alpha$ and $\beta$ are linearly independent, there exists an element $H
\in \t$ which solves the equation
\[
\frac12 N_{\alpha,-\beta}N_{\alpha,\beta-\alpha} + 2 \beta(H) (\alpha(H)-\beta(H))= 0.
\]
Having chosen such an $H \in \t$, we set $\eta :=  -2 \beta(H) /
N_{\alpha,\beta-\alpha}$, then
\[
[X,Y]=0
\]
and, on the other hand, $[X^{\m},Y^{\m}]^{\m} = [X_{\alpha},X_{\beta}]^{\m}
\neq 0$. This proves that condition~(\ref{condition}) does not hold.

%

Using Borel-de-Siebenthal theory~\cite[\S3]{Oniscik}, we can describe the
triples $(H,K,G)$ covered by the above calculations more explicitly. Let us
assume we have chosen bases $X_{\alpha}, Y_{\alpha}$ of the real root spaces
of~$\g$ as described above, i.e.\ such that $\k$ is spanned by $\{ X_{\alpha} |
\alpha \in R_+ \}$ and $\h$ is spanned by $\{ X_{\alpha} | \alpha \in S_+ \}$
for some subset $S_+ \subset R_+$. Let $\l$ be the linear subspace of~$\g$
spanned by $\t \cup \{X_{\alpha}|\alpha \in S_+\} \cup \{Y_{\alpha}|\alpha \in
S_+\}$. Consider the complexification $\l^{\C} \subset \g^{\C}$. Then $\l^{\C}
= \left( \t \otimes \C \right) \oplus \bigoplus_{\alpha \in S}
\g^{\C}_{\alpha}$, where $S := S_+ \cup -S_+$. By \cite[Prop.~15, 1
\S3]{Oniscik}, $\l$ is a subalgebra of~$\g$ if and only if the subset $S$ is
closed and symmetric. In our case, $S$ is symmetric by definition. Let us show
that it is also closed: Let $\alpha, \beta \in S$ such that $\alpha + \beta \in
R$. Following~\cite{Wa}, we set $|\alpha| := \alpha$ if $\alpha \in R_+$ and
$|\alpha| := -\alpha$ if $-\alpha \in R_+$. Furthermore, we are using the
convention that whenever $\alpha + \beta \not\in R$ we set $N_{\alpha,\beta}
=0$, $E_{\alpha+\beta}=0$ and whenever $\alpha \not\in R$, we set $X_{\alpha} =
0$. We compute
\begin{align*}
4[X_{\alpha},X_{\beta}]
 &= [E_{\alpha} + E_{-\alpha}, E_{\beta} + E_{-\beta}]\\
 &=   (N_{\alpha,\beta} E_{\alpha+\beta}
    + N_{-\alpha,-\beta} E_{-\beta-\alpha})\\
    &\ \ + (N_{\alpha,-\beta} E_{\alpha-\beta}
    + N_{-\alpha,\beta} E_{-\alpha+\beta}) \\
 &= 2N_{\alpha,\beta} X_{|\alpha+\beta|}
   + 2N_{\alpha,-\beta} X_{|\alpha-\beta|}.
\end{align*}
Since $N_{\alpha,\beta} \neq 0$ and $\h$ is spanned by $\{X_{\alpha}|\alpha\in
S_+\}$, this shows that $X_{|\alpha+\beta|} \in \h$ and hence $\alpha+\beta \in
S$. Thus $\l$ is actually a subalgebra of~$\g$ and $\h = \l \cap \k$. Moreover,
$\rk(\l) = \rk(\g)$ and hence the corresponding subgroup $L$ is closed.

Conversely, let $L \subset G$ be a closed subgroup containing a maximal
torus~$T$ of~$G$. Let $\sigma \colon G \to G$ be an involution as described
above and assume we have chosen a root space decomposition and vectors
$X_{\alpha}$, $Y_{\alpha}$, $\alpha \in R_+$ such that (\ref{Sigma}) holds. Let
$K$ be the fixed point set of~$\sigma$ and let $H := K \cap L$. Then the triple
$H \subset K \subset G$ is such that $\h$ is spanned by $X_{\alpha}$, $\alpha
\in S_+$, for some subset $S_+ \subseteq R_+$. We have proved the following.

\begin{theorem}\label{ThGKmaxRk}
Assume $(G,K)$ is a symmetric pair such that $\rk(G/K) = \rk(G)$ and let
$\sigma \colon G \to G$ be the corresponding involution. Let $L$ be a
$\sigma$-invariant subgroup with $\rk(L) = \rk(G)$, and let $H := K \cap L$.
Then the triple $(H,K,G)$ satisfies condition~(\ref{condition}) if and only if
$(K,H)$ is a symmetric pair.
\end{theorem}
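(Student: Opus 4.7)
The plan is to bootstrap on the long computation already carried out in the paragraphs preceding the theorem, which established the nontrivial direction \emph{under the additional assumption} that $\h = \mathrm{span}\{X_\alpha : \alpha \in S_+\}$ for some $S_+ \subseteq R_+$. So the proof reduces to two independent tasks: dispatching the easy direction, and verifying that this structural form of $\h$ holds in the setting of the theorem.

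The easy direction is immediate: if $(K,H)$ is a symmetric pair, then Theorem~\ref{ThmST}(2) gives condition~\eqref{condition}, so nonnegative curvature is preserved for small $t>0$.

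For the converse, I would start from a $\sigma$-invariant closed subgroup $L \subseteq G$ with $\rk(L) = \rk(G)$ and $H = K \cap L$, and reduce to the required root-theoretic form of $\h$. Since $\rk(L) = \rk(G)$, after $G$-conjugation we may assume $\t \subseteq \l$; since $\rk(G/K) = \rk(G)$, the maximal torus can be chosen inside $\s$, so $\t \cap \k = 0$. The involution $\sigma$ acts on each $\g_\alpha^\R$ diagonally in the basis $(X_\alpha, Y_\alpha)$ with eigenvalues $+1$ and $-1$; $\sigma$-invariance of $\l$ then forces each $\l \cap \g_\alpha^\R$ to be a sum of these one-dimensional $\sigma$-eigenspaces. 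Setting $S_+ := \{\alpha \in R_+ : X_\alpha \in \l\}$ yields $\h = \l \cap \k = \mathrm{span}\{X_\alpha : \alpha \in S_+\}$, exactly as needed. At this point the pre-theorem argument applies verbatim: assuming $(K,H)$ is not a symmetric pair, one produces $\lambda, \mu \in R_+ \setminus S_+$ with $X_\nu \in \m$ for $\nu = \lambda \pm \mu$, extracts suitable $\alpha, \beta \in \{\lambda, \mu, \nu\}$ by the $\LA_2$/$\LB_2$ case analysis, and constructs $X, Y \in \p$ with $[X,Y] = 0$ but $[X^\m, Y^\m]^\m \neq 0$, in violation of~\eqref{condition}.

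The main obstacle truly lives in that pre-theorem case analysis: splitting on the type of the rank-two subsystem $R(\lambda, \mu)$, on which sign in $\nu = \lambda \pm \mu$ occurs, and on which of $\lambda, \mu$ is the long root in type $\LB_2$, and in each case selecting $\alpha, \beta$ so that neither $\alpha + \beta$ nor $2\alpha - \beta$ is a root. That non-root condition is what makes $[X,Y]$ collapse to just the two terms whose cancellation by $H \in \t$ and $\eta \in \R$ can be arranged. The structural step internal to the theorem proper --- deducing the form of $\h$ from $\sigma$-invariance of $\l$ combined with full rank --- is short and routine in comparison.
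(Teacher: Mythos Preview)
Your proposal is correct and follows the paper's approach: the hard direction is carried entirely by the pre-theorem root-space computation, and the theorem proper reduces to verifying that $\h = \l \cap \k$ has the form $\mathrm{span}\{X_\alpha : \alpha \in S_+\}$, which both you and the paper obtain from $\t \subseteq \l$. The only minor difference is that you invoke $\sigma$-invariance of $\l$ to split each $\l \cap \g_\alpha^\R$ into $\sigma$-eigenlines, whereas the paper's implicit argument uses only that $\t \subseteq \l$ makes $\l$ an $\ad(\t)$-submodule, forcing $\l = \t \oplus \bigoplus_{\alpha \in S}\g_\alpha^\R$ by irreducibility of the two-dimensional root spaces---your step is correct but redundant at that point.
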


It is interesting to note that while condition~(\ref{condition}) fails for the
triples $H \subsetneq K \subsetneq G$ where $(K,H)$ is not a symmetric pair, it
always holds for the triples $H \subsetneq L \subsetneq G$, since $(L,H)$ is a
symmetric pair. Indeed, the involution $\sigma$ leaves $L$ invariant and thus
induces an involution of the same kind as $\sigma$ (mapping each root to its
negative and acting as minus identity on~$\t$) on $L$.

We do not give a full list of the triples where Theorem~\ref{ThGKmaxRk}
applies, but just illustrate the result by the following examples.

\begin{corollary}\label{ExGKmaxRk}
The following chains $H \subsetneq K \subsetneq G$ of compact Lie groups do not
satisfy condition~(\ref{condition}) in Theorem~\ref{ThmST}~(1):
\begin{enumerate}

\item $\SO(n_1) \times \SO(n_2) \times \SO(n_3) ~\subset~ \SO(n) ~\subset~
    \SU(n)$, $n_i \ge 1$, $n_1 + n_2 + n_3 = n$.

\item $\left[ \SO(n_1+1) \times \SO(n_2) \times \SO(n_3) \right] \times
    \left[ \SO(n_1) \times \SO(n_2) \times \SO(n_3) \right] \\
    ~\subset~ \SO(n+1) \times \SO(n) ~\subset~ \SO(2n+1)$, $n_i \ge 1$,
    $n_1 + n_2 + n_3 = n$.

\item $\U(n_1) \times \U(n_2) \times \U(n_3) ~\subset~ \U(n) ~\subset~
    \Sp(n)$, $n_i \ge 1$, $n_1 + n_2 + n_3 = n$

\item $\left[ \SO(n_1) \times \SO(n_2) \times \SO(n_3) \right] \times
    \left[ \SO(n_1) \times \SO(n_2) \times \SO(n_3) \right] \\
    ~\subset~ \SO(n) \times \SO(n) ~\subset~ \SO(2n)$, where $n_i \ge 1$,
$n_1 + n_2 +
    n_3 = n$.

\item $\SO(3) \cdot \SO(3) \cdot \SO(3) ~\subset~ \Sp(4) ~\subset~ \LE_6$.

\item $\SO(3) \cdot \SO(6) ~\subset~ \SU(8)/\{\pm1\} ~\subset~ \LE_7$.

\item $\SO(3) \cdot \Sp(4) ~\subset~ \SO'(16) ~\subset~ \LE_8$.

\item $\SO(3) \cdot \SO(3) ~\subset~ \Sp(3)\cdot\Sp(1) ~\subset~ \LF_4$.

\end{enumerate}
\end{corollary}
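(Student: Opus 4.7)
The plan is to apply Theorem~\ref{ThGKmaxRk} to each of the eight chains. For each $H \subsetneq K \subsetneq G$, I need to verify three things: (i) $(G,K)$ is a symmetric pair with $\rk(G/K)=\rk(G)$, so one of the spaces appears in the list from Helgason's Table~V quoted above; (ii) there exists a closed $\sigma$-invariant subgroup $L\subset G$ of maximal rank with $H = K\cap L$; (iii) $(K,H)$ is not a symmetric pair. Once these three items are checked, Theorem~\ref{ThGKmaxRk} delivers the failure of condition~(\ref{condition}) immediately.

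Step (i) is immediate by inspection: the pairs $(\SU(n),\SO(n))$, $(\SO(2n{+}1),\SO(n{+}1){\times}\SO(n))$, $(\Sp(n),\U(n))$, $(\SO(2n),\SO(n){\times}\SO(n))$, $(\LE_6,\Sp(4))$, $(\LE_7,\SU(8)/\{\pm 1\})$, $(\LE_8,\SO'(16))$ and $(\LF_4,\Sp(3){\cdot}\Sp(1))$ appearing in~(1)--(8) are precisely the maximal rank compact irreducible symmetric pairs from that table.

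For step (ii), I would exhibit $L$ explicitly case by case. In the classical chains~(1)--(4) the natural block-diagonal subgroups work: take $L=\mathsf{S}(\U(n_1){\times}\U(n_2){\times}\U(n_3))$ inside $\SU(n)$ in~(1); $L=\Sp(n_1){\times}\Sp(n_2){\times}\Sp(n_3)$ inside $\Sp(n)$ in~(3); and in~(2) and~(4) an orthogonal block subgroup of the shape $\SO(2n_1{+}1){\times}\SO(2n_2){\times}\SO(2n_3)$ or $\SO(2n_1){\times}\SO(2n_2){\times}\SO(2n_3)$, arranged so that each block's defining representation splits compatibly with the $(\pm 1)$-eigenspaces of~$\sigma$. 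A direct block-level intersection computation then recovers the listed $H$, and $\sigma$-invariance of $L$ is visible from the arrangement. In the exceptional chains~(5)--(8), I would read $L$ off the extended Dynkin diagram via Borel--de Siebenthal; for instance in~(5), $L=\SU(3){\cdot}\SU(3){\cdot}\SU(3)$ is the regular subgroup of $\LE_6$ obtained by deleting the branching node of $\widetilde{\LE_6}$, and since $\sigma$ acts as minus identity on a maximal torus it restricts to complex conjugation on each $\SU(3)$ factor, cutting it down to $\SO(3)$. Analogous identifications, dictated by the extended Dynkin diagrams of $\LE_7$, $\LE_8$ and $\LF_4$, supply $L$ for~(6)--(8).

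For step~(iii), the Cartan classification of compact symmetric pairs excludes each listed $(K,H)$: symmetric subalgebras of $\so(n)$ take the form $\so(p)\oplus\so(q)$ or $\u(m)$, so the three-block subalgebras in~(1), (2), (4) cannot be symmetric; likewise symmetric subalgebras of $\sp(n)$ are $\sp(p)\oplus\sp(q)$ or $\u(n)$, which rules out the three-factor $\u$-subalgebra in~(3); and the subalgebras in~(5)--(8) can be checked directly against Helgason's list of symmetric subalgebras of $\sp(4)$, $\su(8)$, $\so(16)$ and $\sp(3)\oplus\sp(1)$. The one delicate point — and the main obstacle — is the explicit identification and $\sigma$-invariance check for $L$ in the exceptional cases; the classical cases reduce to transparent block computations.
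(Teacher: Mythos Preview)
Your proposal is correct and follows essentially the same route as the paper: the paper also applies Theorem~\ref{ThGKmaxRk} after exhibiting, for each chain, a full-rank $\sigma$-invariant subgroup $L$ with $H = K \cap L$, and it names exactly the groups you indicate in (1)--(5) together with $L=\SU(3)\cdot\SU(6)$, $\SU(3)\cdot\LE_6$, $\SU(3)\cdot\SU(3)$ for (6)--(8). The only minor difference is that the paper ensures $(K,H)$ is non-symmetric indirectly, by choosing $L$ so that $(G,L)$ is not symmetric (noting that if $(G,L)$ were symmetric the two involutions would commute and force $(K,H)$ symmetric), whereas you check non-symmetry of $(K,H)$ directly against the Cartan classification.
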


\begin{proof}
All examples are constructed in the following manner. Choose a subgroup $L
\subset G$ of full rank such that $(G,L)$ is not a symmetric pair\footnote{If
$(G,L)$ is a symmetric pair, then $(L,H)$ will be a symmetric pair as well,
since the involution corresponding to $(G,L)$ commutes with $\sigma$.}. Then
determine the subgroup $H \subset L$ (unique up to conjugacy) such that $(L,H)$
is a symmetric pair satisfying $\rk(L/H) = \rk(L)$. Here are the subgroups~$L$
chosen in the examples above: (1) $\eS(\U(n_1)\times\U(n_2)\times\U(n_3))$; (2)
$\SO(2n_1+1)\times\SO(2n_2)\times\SO(2n_3)$; (3)
$\Sp(n_1)\times\Sp(n_2)\times\Sp(n_3)$; (4)
$\SO(2n_1)\times\SO(2n_2)\times\SO(2n_3)$; (5) $\SU(3)\cdot\SU(3)\cdot\SU(3)$;
(6) $\SU(3)\cdot\SU(6)$; (7) $\SU(3)\cdot\LE_6$; (8) $\SU(3)\cdot\SU(3)$. See
\cite[Thm.~16, \S3]{Oniscik} for regular subgroups.
\end{proof}


\section{Subgroups of full rank}
\label{FullRank}

In this section, we consider the case of closed subgroups $H \subsetneq K
\subsetneq G$ of a simple compact Lie group~$G$ such that $\rk(H) = \rk(K) =
\rk(G)$. We show in Theorem~\ref{ThmFullRank} that in this case the triple
$(H,K,G)$ satisfies condition~(\ref{condition}) if and only if $(K,H)$ is a
symmetric pair.  We will show in Theorem~\ref{ThmFullRank} below that we may
restrict ourselves to chains $H \subsetneq K \subsetneq G$ with $\rk(G) \le 3$.
In the following lemma, we prove our result in this special case.

\begin{lemma}\label{LmRankThree}
For the following chains of compact Lie groups $H \subsetneq K \subsetneq G$
there exist elements $X,Y \in \p$ such that $[X,Y]=0$ and
$[X^{\m},Y^{\m}]^{\m}\neq0$.
\begin{enumerate}

\item[(1)] $T^3 ~\subset~ \eS(\U(3)\times\U(1)) ~\subset~ \SU(4),$

\item[(2)] $ \U(2) {\times} \SO(2) ~\subset~ \SO(6) ~\subset~  \SO(7),$

\item[(3)] $ \U(2) {\times} \SO(2) ~\subset~  \SO(5) {\times} \SO(2) ~\subset~  \SO(7),$

\item[(4)] $\U(1) {\times} \U(1) {\times} \U(1) ~\subset~ \U(3) ~\subset~\Sp(3),$

\item[(5a)] $\Sp(1) {\times} \U(1) {\times} \Sp(1) ~\subset~ \Sp(2) {\times} \Sp(1) ~\subset~
    \Sp(3),$

\item[(5b)] $\Sp(1) {\times} \U(1) {\times} \U(1) ~\subset~ \Sp(2) {\times} \U(1) ~\subset~
    \Sp(3),$

\item[(6)] $T^2 ~\subset~ \SU(3) ~\subset~ \LG_2$.

\end{enumerate}
Here $T^n$ denotes an $n$-dimensional torus.
\end{lemma}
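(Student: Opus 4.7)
The plan is to handle the seven chains uniformly via root space decompositions, then verify each case explicitly. Since $\rk(H)=\rk(K)=\rk(G)$, the three groups share a common maximal torus whose Lie algebra $\t$ is contained in $\h$. Consequently both complements decompose as sums of two-dimensional real root spaces:
\[
\m = \bigoplus_{\alpha \in R_\k^+ \setminus R_\h^+} (\R X_\alpha + \R Y_\alpha), \qquad
\s = \bigoplus_{\alpha \in R_\g^+ \setminus R_\k^+} (\R X_\alpha + \R Y_\alpha),
\]
where $R_\h \subset R_\k \subset R_\g$ are the respective root subsystems.

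For each chain I would first select positive roots $\lambda, \mu \in R_\k^+ \setminus R_\h^+$ such that $\nu := \lambda - \mu$ again lies in $R_\k^+ \setminus R_\h^+$ while $\lambda + \mu \notin R_\g$. The first condition is available because $(K,H)$ is non-symmetric in each of the seven cases, and the second eliminates extraneous terms so that $[X_\lambda, X_\mu]$ is proportional to $X_\nu \in \m$, by the same computation used in Section~\ref{AClassEx}. This immediately guarantees $[X^\m, Y^\m]^\m \neq 0$ for any $X, Y$ whose $\m$-components include $X_\lambda$ and $X_\mu$.

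Next I would produce $X, Y \in \p$ of the form $X = X_\lambda + U$, $Y = X_\mu + \eta Y_\nu + V$ with $[X,Y]=0$, where $U, V$ are corrections drawn from the remaining basis vectors in $\m \oplus \s$. The torus-element correction $H \in \t$ from the proof of Theorem~\ref{ThGKmaxRk} is unavailable here since $\t \subset \h$; in its place I would take $U, V$ to be horizontal vectors $X_\delta, Y_\delta$ with $\delta \in R_\g^+ \setminus R_\k^+$ chosen so that $\lambda \pm \delta$ or $\mu \pm \delta$ hits $\pm \nu$ or $\pm\mu$, together with $Y_\lambda$- or $Y_\mu$-type corrections in $\m$ if needed. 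Solving $[X,Y]=0$ then reduces to a small linear system in the scaling parameters, which can be arranged by an ad hoc choice in each case.

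The hardest case will be (6), $T^2 \subset \SU(3) \subset \LG_2$, where $R_\k$ consists of the long roots of $G_2$ and $R_\g \setminus R_\k$ of the short roots, so the mixed brackets linking $\m$ to $\s$ involve the more delicate $G_2$ structure constants and short-plus-short brackets can produce long roots back in $\m$. Case~(1), with $\h = \t$, also leaves limited room for corrections. The remaining cases (2)--(5b) are more flexible and amount to concrete calculations in the standard matrix realizations of $\SO(n)$, $\Sp(n)$, and $\U(n)$; for each I would exhibit the pair $(X,Y)$ explicitly and verify the two bracket conditions directly from the root data.
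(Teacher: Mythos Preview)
Your proposal is an outline rather than a proof: the decisive step---producing $U,V$ so that $[X,Y]=0$---is asserted (``reduces to a small linear system \dots\ which can be arranged by an ad hoc choice'') but never carried out. Since the entire content of the lemma is the existence of such a pair in each of seven specific chains, this is the whole proof, not a detail to be filled in later. Without exhibiting the vectors (or at least showing the linear system is consistent in each case) there is nothing to check. Note also that your ansatz puts $\eta Y_\nu$ into $Y^{\m}$, so $[X^{\m},Y^{\m}]$ acquires an extra $[X_\lambda,Y_\nu]$ term; this does not spoil $[X^{\m},Y^{\m}]^{\m}\neq 0$ (the $X_\nu$-component survives), but it does make the cancellation system you must solve larger than you suggest.

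The paper's proof is quite different in style. Rather than a uniform root-theoretic scheme, it works case by case in the standard matrix models of $\su(4)$, $\so(7)$, $\sp(3)$: for cases (1), (2), (3), (5a), (5b) it simply writes down explicit $X,Y$ (e.g.\ $X=E_{12}+E_{14}$, $Y=E_{23}+E_{34}$ for case~(1)) and verifies the two bracket conditions by a one-line computation. These vectors are in fact simpler than your ansatz---no $Y_\nu$ term is needed, just two $X_\alpha$'s each. Case~(4) is dispatched by observing that $(\Sp(3),\U(3))$ has $\rk(G/K)=\rk(G)$, so the machinery of Section~\ref{AClassEx} applies directly; this is the one place where your torus-replacement idea is unnecessary, since here $\t\subset\s$ and the Section~\ref{AClassEx} argument works verbatim. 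Case~(6) is not proved in the paper at all but deferred to~\cite{kk}, confirming your instinct that the $\LG_2$ case is the delicate one. If you want to pursue your uniform approach, the cleanest route is probably to drop the $\eta Y_\nu$ term and look instead for $X=X_\lambda+X_\alpha$, $Y=X_\mu+X_\beta$ with $\alpha,\beta\in R_{\s}$ chosen so that $[X_\alpha,X_\beta]=-[X_\lambda,X_\mu]$ and the cross-brackets vanish; this is exactly what the paper's explicit vectors encode in root-space language.
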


\begin{proof}
Case (4) is a special case of part~(3) in Corollary~\ref{ExGKmaxRk}. For all
other cases we exhibit our vectors $X,Y$ using explicit matrix representations.

We identify $\so(n)$ with the set of skew-symmetric real $n {\times} n$-matrices,
$\u(n)$ with the set of Hermitian complex $n {\times} n$-matrices, $\sp(n)$ with the
set of Hermitian quaternionic $n {\times} n$-matrices. Let $E_{\nu\mu}$ denote the
skew-symmetric matrix with the entry~$+1$ in position $(\nu,\mu)$, the
entry~$-1$ in position $(\mu,\nu)$ and zeros elsewhere, while $F_{\nu\mu}$
denotes the symmetric matrix with the entry~$1$ in positions $(\mu,\nu)$ and
$(\nu,\mu)$ and zeros elsewhere.

\begin{enumerate}

\item[(1)] Let $\h =\t^3= \{\diag(it_1,it_2,it_3,-i(t_1+t_2+t_3))\mid
    t_1,t_2,t_3\in \R \}$,
\[
\h \subset \k = \left\{ \left. \left(
               \begin{array}{cc}
                 A &   \\
                   & z \\
               \end{array}
             \right)
\right| A \in \u(3), z = - \tr A \right\}.
\]
We take $X=E_{12} + E_{14}$ and $Y=E_{23} + E_{34}$. Note that $E_{12},
E_{23} \in \m$ while $E_{14}, E_{34}\in \s$. Then $[X^{\m},Y^{\m}] = E_{13}
\in \m$ and $[X,Y]=0$.

\item[(2)] Here
\[
\u(2)=\left\{\begin{pmatrix} X & -Y  \\ Y & X  \end{pmatrix}
~|~Y=Y^t\right\}  \subset \so(4),
\]
and in this way, $\h=\u(2)\op\so(2) \subset \so(4) \op \so(2) \subset \k =
\so(6)$. Thus $\m = \m_1 \op \m_2$ where $\m_1=\so(4)\ominus\u(2)$ and
$\m_2=\so(6)\ominus (\so(4) \op \so(2))$. We take $X = E_{15} - E_{17}$, $Y
= E_{25} + E_{27}$, where
    $E_{15}, E_{25} \in \m$ and $E_{17},E_{27}\in \s$. Then
    $[X^{\m},Y^{\m}] =-E_{12}$ has a nonzero $\m$ component, while
    $[X,Y]=0$.

\item[(3)] We have $\h=\u(2) \oplus \so(2) \subset \so(4) \oplus \so(2)
    \subset \so(5) \oplus \so(2)$, and thus $\m = \m_1 \op \m_2$ where
    $\m_1=\so(5) \ominus \so(4)$, and $\m_2=\so(4)\ominus\u(2)$. We take $X
    = E_{15}+E_{16}$ and $Y=E_{25}-E_{26}$, where $E_{15},~E_{25} \in
    \m_1$, and $E_{16},~E_{26} \in \s$.  Then $[X^{\m},Y^{\m}] = -E_{12}$,
    which has a nonzero $\m$-component (in $\m_2$), and $[X,Y]=0$.

\item[(4)] See Corollary~\ref{ExGKmaxRk}~(3).

\item[(5)] In both cases (5a) and (5b), we have the same $\m=\m_1\op\m_2$
    where $\m_1=\fsp(2)\ominus (\fsp(1)\op\fsp(1))$ and
    $\m_2=\fsp(1)\ominus\u(1)$.
We may take $X^{\m} =iF_{12}$ and $Y^{\m} =kF_{12}$ (both in $\m_2$). We
take $X^{\s} = E_{23} +iF_{13}$ and $Y^{\s} = jF_{23} -kF_{13}$ (in both
cases).
    Then $[X^{\m},Y^{\m}]=-2j(F_{11}-F_{22})$, so that
    $[X^{\m},Y^{\m}]^{\m}=2jF_{22} \neq 0$, while $[X,Y]=0$.

\item[(6)] See \cite[Subsection~2.4 (1c)]{kk}.

\end{enumerate}
\end{proof}

\begin{remark}\label{RemPrimeTriple}
Let $H \subsetneq K \subsetneq G$ be a triple of compact Lie groups and suppose
$H' \subsetneq K' \subsetneq G'$ is another triple of compact Lie groups with
$G' \subseteq G$ such that for the orthogonal complement~$\p'$ of $\h'$ in
$\g'$ we have $\p' \subset \p$, while for the orthogonal complement~$\m'$ of
$\h'$ in $\k'$ we have $\m' \subset \m$. Then it is sufficient to exhibit a
pair of vectors $X,Y \in \p'$ such that $[X,Y]=0$ but $[X^{\m'},Y^{\m'}]^{\m'}
\neq 0$ in order to show that the triple $(H,K,G)$ does not satisfy
condition~(\ref{condition}): Since $X^{\m}=X^{\m'}$, $Y^{\m}=Y^{\m'}$, and
$[X^{\m'}, Y^{\m'}]\in\g'$, we know $[X^{\m'}, Y^{\m'}]^{\m'} =
[X^{\m},Y^{\m}]^{\m}$.
\end{remark}

\begin{remark}\label{RemEnlargeH}
Conversely, with the notation as in the remark above, if the triple $(H,K,G)$
satisfies condition~(\ref{condition}) then the triple $(H',K',G')$ also
satisfies condition~(\ref{condition}). In particular, if the group $H$ is
enlarged to $H'$, condition~(\ref{condition}) is preserved.
\end{remark}

In the following, we will consider the case of compact Lie groups $H \subsetneq
K \subsetneq G$ where $H$ and $K$ are closed subgroups of full rank in~$G$ such
that $(K,H)$ is not a symmetric pair. We may chose a maximal torus~$T$ of~$H$,
which is then also maximal torus of $K$ and $G$, and consider a root space
decomposition with respect to~$T$. Using the notation of
Section~\ref{AClassEx}, there are subsets $R_H \subsetneq R_K \subsetneq R$,
such that
\[\h^{\C} = \t^{\C} \oplus \bigoplus_{\alpha \in R_H} \g_{\alpha}, \qquad
\k^{\C} = \t^{\C} \oplus \bigoplus_{\alpha \in R_K} \g_{\alpha},
\]
where $\h^{\C}, \k^{\C}, \t^{\C}, \m^\C, \s^\C$ denote the complexifications of
$\h,\k,\m,\s,\t$, respectively. Let $R_{\m} = R_K \setminus R_H$ and $R_{\s} =
R \setminus R_K$.  The scalar product on $\t^*$ is induced from~$g_0$. The root
systems $R_H$ and $R_K$ of $H$ and $K$, respectively, are symmetric, hence the
subsets $R_{\m}$ and $R_{\s}$ are also symmetric,
see~\cite[Ch.\,1,\,\S3.11]{Oniscik}.

\begin{theorem}\label{ThmFullRank}
Let $G$ be a simple compact Lie group and let $H \subsetneq K \subsetneq G$ be
closed subgroups. If $\rk(H) = \rk(K) = \rk(G)$ then either $(K,H)$ is a
symmetric pair or there exist elements $X,Y \in \p$ such that $[X,Y]=0$ and
$[X^{\m},Y^{\m}]^{\m}\neq0$.
\end{theorem}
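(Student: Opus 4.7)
The plan is to reduce the general case to Lemma~\ref{LmRankThree} by means of Remark~\ref{RemPrimeTriple}. First I reinterpret the hypothesis that $(K,H)$ is not a symmetric pair in root-theoretic language: the condition $[\m,\m]\not\subseteq\h$ translates into the existence of roots $\lambda,\mu\in R_\m := R_K\setminus R_H$ such that $\nu:=\lambda+\mu$ is a root lying in $R_\m$ (rather than in $R_H$). Hence the subsystem $R(\lambda,\mu)\subseteq R$ generated by $\lambda$ and $\mu$ is irreducible of rank $2$, of type $\LA_2$, $\LB_2$, or $\LG_2$.

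Next I reduce to $\rk(G)\le 3$. Since $K\subsetneq G$, we have $R_\s := R\setminus R_K\ne\emptyset$; pick $\xi\in R_\s$. The set $R' := R\cap\mathrm{span}_{\R}(\lambda,\mu,\xi)$ is a closed symmetric sub-root system of rank at most~$3$, and the corresponding semisimple subalgebra $\g'\subseteq\g$ (with Cartan given by the span of the coroots of $R'$) gives rise to a sub-triple $(H',K',G')$ via $\h' := \h\cap\g'$ and $\k' := \k\cap\g'$. By construction $\rk(H')=\rk(K')=\rk(G')\le 3$; one still has $H'\subsetneq K'\subsetneq G'$ (the inclusion $\lambda\in R'\cap R_\m$ witnesses $\h'\subsetneq\k'$, and $\xi\in R'\cap R_\s$ witnesses $\k'\subsetneq\g'$); the pair $(K',H')$ remains non-symmetric (as $\nu\in R'\cap R_\m$); and $\p'\subseteq\p$, $\m'\subseteq\m$ since every space in sight is a direct sum of pieces of the Cartan with root spaces. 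Thus $(H',K',G')$ fits the hypotheses of Remark~\ref{RemPrimeTriple}.

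The remaining step is to match the sub-triple $(H',K',G')$ with one of the cases listed in Lemma~\ref{LmRankThree}. This is carried out by a case analysis based on the isomorphism type of $R'$ (the possibilities are $\LA_3$, $\LB_3$, $\LC_3$, and the reducible rank-$3$ types such as $\LA_2\times\LA_1$) together with the intersections $R'\cap R_K$ and $R'\cap R_H$. Using the simplicity of $\g$, which guarantees that $R_\s$ contains roots not orthogonal to $R(\lambda,\mu)$, one has enough flexibility in the choice of $\xi$ to ensure that $(H',K',G')$ is (up to conjugacy) one of the triples (1)--(6). The explicit pair $X,Y\in\p'$ produced by Lemma~\ref{LmRankThree} then gives, via Remark~\ref{RemPrimeTriple}, the desired $X,Y\in\p$.

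The main obstacle is precisely this case analysis. Most delicate are the situations where the intersection $R'\cap R_H$ is larger than the root system of the subgroup $H$ appearing in the matching lemma case: here one must either refine the choice of $\xi$ (possibly reducing the rank further to $2$ so as to land in case~(6)), or shrink $\g'$ once more to a subalgebra on which the $H$-root data match exactly. Carrying out this matching uniformly across all Borel--de Siebenthal chains for the rank-$3$ simple groups $\SU(4)$, $\Sp(3)$, $\Spin(7)$, and for the rank-$2$ group $\LG_2$, is where the bulk of the detailed work lies.
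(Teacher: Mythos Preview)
Your approach is essentially the paper's own: translate non-symmetry into the existence of $\lambda,\mu\in R_\m$ with $\lambda+\mu\in R_\m$, adjoin a root from $R_\s$ to pass to a rank-$\le 3$ simple subalgebra $\g'$, and invoke Lemma~\ref{LmRankThree} via Remark~\ref{RemPrimeTriple}. The paper tightens two of your loose ends --- it chooses the auxiliary $\s$-root $\nu$ from the start so that $\lambda+\nu\in R$ (using that $\k^\C$ acts effectively on $\s^\C$), which forces $R(\lambda,\mu,\nu)$ to be irreducible and eliminates the reducible rank-$3$ types you worry about, and it notes that $R(\lambda,\mu)$ cannot itself be of type $\LG_2$ (else $R_K=R$) --- after which the case analysis collapses to exactly six inclusions $\LA_2\subset\LA_3,\LB_3,\LC_3,\LG_2$ and $\LB_2\subset\LB_3,\LC_3$, each matched directly to an entry of Lemma~\ref{LmRankThree}.
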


\begin{proof}
Since $(K,H)$ is not a symmetric pair, there exist $\lambda, \mu \in R_{\m}$
such that $[\g_\lambda, \g_\mu] \not \subset \h^\C$. Hence, $0 \neq
\g_{\lambda+\mu} \subset \m^\C$. Let $R(\lambda,\mu)$ denote the subset of~$R$
consisting of all roots which are nonzero linear combinations of $\lambda$ and
$\mu$ with integer coefficients. Because $R(\lambda,\mu)$ contains the six
roots $\g_{\pm\lambda}, \g_{\pm\mu}, \g_{\pm(\lambda+\mu)}$, it is an
irreducible root system of rank two.

Since $\g$ is simple, $\k^\C$ acts effectively on~$\s^\C$ and there is a root
$\nu \in R_{\s}$ such that $[\g_{\lambda},\g_{\nu}]\neq0$. Let
$R(\lambda,\mu,\nu)$ be the set of all roots which are nonzero linear
combinations of $\lambda,\mu,\nu$ with integer coefficients. Then
$R(\lambda,\mu,\nu)$ is a closed, symmetric subsystem of~$R$ and hence a root
system of rank two or three, which contains $R(\lambda,\mu)$ as a closed,
symmetric, proper subsystem.

We know $\lambda + \nu \in R$, thus the root system $R(\lambda,\mu,\nu)$ is
irreducible. Hence the inclusion $R(\lambda,\mu) \subset R(\lambda,\mu,\nu)$ is
of one of the following types
\begin{equation}\label{inclusions}
\begin{array}{lll}
 (1)\quad\LA_2 \subset \LA_3,\qquad
&(2)\quad\LA_2 \subset \LB_3,\qquad
&(3)\quad\LB_2 \subset \LB_3, \\
 (4)\quad\LA_2 \subset \LC_3,\qquad
&(5)\quad\LB_2 \subset \LC_3,\qquad
&(6)\quad\LA_2 \subset \LG_2.
\end{array}
\end{equation}
(Maximal closed symmetric subsets of irreducible root systems are given in
\cite[Ch.1, \S3.11]{Oniscik}.)

Let $\g'$ be the semisimple part of the subalgebra of~$\g$ spanned by the real
root spaces $\g_{\alpha}^\R$, $\alpha \in R(\lambda,\mu,\nu)$ and~$\t$. Then
$\g'$ is in fact a simple Lie algebra isomorphic to $\su(4)$, $\so(7)$,
$\sp(3)$, or $\Lie( \LG_2 )$. Let $\h' := \h \cap \g'$ and $\k' := \k \cap
\g'$. Then $\m' := \m \cap \g'$ is the orthogonal complement of~$\h'$ in~$\k$
and $\s' := \s \cap \g'$ is the orthogonal complement of~$\k'$ in $\g'$. In
particular, Remark~\ref{RemPrimeTriple} applies.

We complete the proof by showing that for each type of inclusion
$R(\lambda,\mu) \subset R(\lambda,\mu,\nu)$ as enumerated
in~(\ref{inclusions}), the triple $\h' \subsetneq \k' \subsetneq \g'$
corresponds to one of the triples in Lemma~\ref{LmRankThree}. Thus there exist
$X,Y \in \p' \subseteq \p$ such that $[X,Y]=0$ and $[X^{\m'},Y^{\m'}]^{\m'} =
[X^\m,Y^\m]^\m \neq 0$.

Since $\g_\nu \subset \s$ we have that $\k' \subsetneq \g'$ and since
$\g_{\pm\lambda}, \g_{\pm\mu}, \g_{\pm(\lambda+\mu)} \subset \m$, we know
$[\m',\m'] \not\subseteq \h$. In particular $\m' \neq 0$. Let $\k^*$ be the
subalgebra of~$\g$ spanned by~$\t$ and the real root spaces $\g_\alpha^\R$,
$\alpha \in R(\lambda,\mu)$. Since $\lambda, \mu \in R_\m$, it follows that
$\k^*$ is contained in $\k'$.

When the subalgebra~$\k^* \subsetneq \g'$ is maximal, it follows that $\k^* =
\k'$.  This is the case for all possible inclusions $\k^* \subset \g'$
enumerated in (\ref{inclusions}) except (2) and (5). In case~(2) we have $\LA_2
\subset \LA_3 \subset \LB_3$, thus we further distinguish the cases (2a)~$\k'
\cong \so(6)$ and (2b)~$\k' = \k^* \cong \u(3)$. In case~(5) we distinguish the
cases (5a)~$\k' \cong \sp(2) \oplus \sp(1)$ and (5b)~$\k' = \k^* \cong \sp(2)
\oplus \u(1)$.

In the cases (1), (4) and (6) where the semisimple part of $\k'$ is of type
$\LA_2$, it follows that $\h'$ is abelian, as all six roots of $R(\lambda,\mu)$
are contained in $R_\m$; hence the triple $\h' \subset \k' \subset \g'$ is
determined, up to an automorphism of~$\g'$. We have one of the chains of Lie
groups  in Lemma~\ref{LmRankThree}, (1), (4), or (6).

If the semisimple part of $\k'$ is of type $\LB_2$ as in cases (3) and (5),
then either six or eight roots of $R(\lambda,\mu)$ are contained in $R_\m$ and
it follows that $\h$ is either abelian or its semisimple part is of type
$\LA_1$. Note that there are two possibilities for the inclusion $\LA_1 \subset
\LB_2$, corresponding to the inclusions of Lie groups $\SU(2) \subset \SO(5)$
and $\SO(3) \subset \SO(5)$; in the first case the triple $\h' \subset \k'
\subset \g'$ corresponds to the chains (3) or (5) in Lemma~\ref{LmRankThree}.
In the second case, the triple $\h' \subset \k' \subset \g'$ would correspond
to either $\SO(3) \times \SO(2) \times \SO(2) \subset \SO(5) \times \SO(2)
\subset \SO(7)$ in case~(3) or $\U(2) \times \Sp(1) \subset \Sp(2) \times
\Sp(1) \subset \Sp(3)$ in case~(5). But in both cases $(K,H)$ is a symmetric
pair, a contradiction.

In case~(2a) we have $\g' \cong \so(7)$ and $\k' \cong \so(6)$. Hence $\h'
\subset \k'$ is a subalgebra of rank~3 such that $(\k',\h')$ is not a symmetric
pair. The only possibilities are $\h' \cong \u(2) \oplus \u(1)$ and $\h' \cong
\so(2) \oplus \so(2) \oplus \so(2)$. The first case is covered by
Lemma~\ref{LmRankThree}~(2), the second case by Remark~\ref{RemPrimeTriple} and
Lemma~\ref{LmRankThree}~(2).

Finally, in case~(2b) we have that $\k' \cong \u(3)$, it follows that $\h$ is
abelian, since $\m'$ is at least $6$-dimensional. Hence the triple $\h' \subset
\k' \subset \g'$ corresponds to a triple $T^3 \subset \U(3) \subset \Spin(7)$.
Using Remark~\ref{RemPrimeTriple} once more to replace $\Spin(7)$ by
$\Spin(6)$, we see that this case is covered by Lemma~\ref{LmRankThree}~(1) via
the isomorphism $\SU(4) \cong \Spin(6)$.

We have now shown that in each case there exist elements $X,Y \in \p$ such that
$[X,Y]=0$ and $[X^{\m},Y^{\m}]^{\m}\neq0$.
\end{proof}

\begin{corollary}\label{CorFullRank}
Let $G$ be a compact Lie group and let $H \subsetneq K \subsetneq G$ be closed
subgroups. If $\rk(H) = \rk(K) = \rk(G)$ then  the triple satisfies
condition~(\ref{condition}) if and only if for each simple factor~$G_i$ of~$G$,
at least one of the following holds.
\begin{enumerate}
\item[(i)] $(G_i \cap K, G_i \cap H)$ is a symmetric pair.
\item[(ii)] $\g_i \subseteq \k$.
\item[(iii)] $\g_i \cap \k \subseteq \h$.
\end{enumerate}
In particular, if there is a simple factor~$G_i$ of~$G$ for which none of the
above conditions holds, then there exist elements $X,Y \in \p$ such that
$[X,Y]=0$ and $[X^{\m},Y^{\m}]^{\m}\neq0$.

\end{corollary}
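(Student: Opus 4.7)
The plan is to reduce to the simple case handled by Theorem~\ref{ThmFullRank} by showing that condition~(\ref{condition}) decouples across the simple ideals of~$\g$. First I would establish the splitting: write the decomposition $\g = \z(\g) \oplus \g_1 \oplus \cdots \oplus \g_m$ and, since $\rk(H)=\rk(G)$, pick a maximal torus contained in $\h$ of the form $\t = \z(\g) \oplus \bigoplus_i \t_i$ with $\t_i \subset \g_i$ a Cartan. Every root of $\g$ with respect to $\t$ is supported on a single $\t_i$, so each root space lies entirely in one $\g_i$, and any $\ad(\t)$-invariant subspace splits accordingly. Applied to $\h$ and $\k$, this gives
\[
\h = \z(\g) \oplus \bigoplus_i \h_i, \qquad \k = \z(\g) \oplus \bigoplus_i \k_i,
\]
with $\h_i := \h \cap \g_i$, $\k_i := \k \cap \g_i$, and hence $\p = \bigoplus_i \p_i$ where $\p_i = \m_i \oplus \s_i$ is the induced vertical/horizontal splitting on the $i$-th factor.

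Next I would use $[\g_i, \g_j] = 0$ for $i \neq j$: for $X = \sum_i X_i$ and $Y = \sum_i Y_i$ in $\p$ this yields the $g_0$-orthogonal decompositions
\[
[X,Y] = \sum_i [X_i, Y_i], \qquad [X^\m, Y^\m]^\m = \sum_i [X_i^{\m_i}, Y_i^{\m_i}]^{\m_i}.
\]
Taking norms shows that condition~(\ref{condition}) for $(H, K, G)$ is equivalent to the corresponding inequality holding on each $\p_i$ separately. Moreover, any pair $X_i, Y_i \in \p_i$ with $[X_i, Y_i] = 0$ and $[X_i^{\m_i}, Y_i^{\m_i}]^{\m_i} \neq 0$ extends to a witness on $\p$ by padding with zeros (cf.~Remark~\ref{RemPrimeTriple}).

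The final step is a trichotomy on each simple factor. If $\g_i \subseteq \k$ then $\s_i = 0$ and the $i$-th inequality holds trivially, since $[X_i^{\m_i}, Y_i^{\m_i}]^{\m_i}$ is then a component of $[X_i, Y_i]$. If $\g_i \cap \k \subseteq \h$ then $\m_i = 0$ and the left-hand side vanishes. Otherwise $\h_i \subsetneq \k_i \subsetneq \g_i$ is a full-rank chain in the simple $\g_i$, so Theorem~\ref{ThmFullRank} applies: the $i$-th inequality holds iff $(\k_i, \h_i)$ is a symmetric pair, via Theorem~\ref{ThmST}(2) in the forward direction and the explicit witnesses produced in the proof of Theorem~\ref{ThmFullRank} in the reverse direction. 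I do not expect a serious obstacle; the only step requiring care is the decoupling, and it follows at once from the $g_0$-orthogonality of the simple ideals.
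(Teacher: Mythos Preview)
Your proposal is correct and follows essentially the same route as the paper: decompose $\g$ into its center and simple ideals, use the full-rank hypothesis to split $\h$ and $\k$ accordingly, observe that condition~(\ref{condition}) decouples across the factors, and then invoke Theorem~\ref{ThmFullRank} on each nontrivial factor together with Remark~\ref{RemPrimeTriple} for the witness direction. The paper compresses the decoupling into one sentence (``from this fact it is obvious''), whereas you spell out the root-space argument and treat the degenerate cases~(ii) and~(iii) explicitly; this is additional detail, not a different method.
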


\begin{proof}
Let $\g=\z\oplus\g_1 \oplus \ldots \oplus \g_m$, where $\z$ is the center of
$\g$ and where the $\g_i$ are the simple factors of~$\g$. Since $\rk(H) =
\rk(K) = \rk(G)$, we have that $\k=\z \oplus (\g_1 \cap \k) \oplus \ldots
\oplus (\g_m \cap \k)$ and $\h=\z \oplus (\g_1 \cap \h) \oplus \ldots \oplus
(\g_m \cap \h)$. From this fact it is obvious that condition~(\ref{condition})
holds if and only if it holds for each triple  $(H \cap G_i, K \cap G_i, G_i)$
where $i =1, \ldots, m$. Now the first part of the corollary follows from
Theorem~\ref{ThmFullRank}.

Assume there is a simple factor~$G_i$ of~$G$ for which none of (i), (ii), (iii)
above holds. Apply Remark~\ref{RemPrimeTriple} to the chain $H \cap G_i
\subsetneq K \cap G_i \subsetneq G_i$ to see that the second part of the
assertion follows.
\end{proof}


\section{New examples}
\label{NewExamples}

\begin{theorem}\label{ThmSUSOSO}
The chain $\SU(n) \subset \SO(2n) \subset \SO(2n+1)$, $n \ge 2$ satisfies
condition~(\ref{condition}).
\end{theorem}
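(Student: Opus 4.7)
The plan is to verify condition~(\ref{condition}) directly, using the orthogonal decomposition $\g = \so(2n+1) = \h \oplus \m_1 \oplus \R Z \oplus \s$, where $Z$ is the standard complex structure on $\R^{2n}$ (spanning the central $\u(1)\subseteq\u(n)$), $\m_1 = \so(2n) \ominus \u(n)$ is the tangent space of the Hermitian symmetric space $\SO(2n)/\U(n)$ (so $AJ = -JA$ for $A \in \m_1$), and $\s \cong \R^{2n}$. The relevant bracket relations follow from this structure: $[\m_1, \m_1] \subseteq \u(n) = \h \oplus \R Z$, $[Z, A] = 2JA$ for $A \in \m_1$, $[M, v] = Mv$ for $M \in \k$ and $v \in \s$, and crucially $[\s, \s] \subseteq \k$ because $(\g, \k) = (\so(2n+1), \so(2n))$ is a symmetric pair. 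Writing $X = A + aZ + v$ and $Y = B + bZ + w$ with $A, B \in \m_1$, $a, b \in \R$, $v, w \in \s$, a direct computation yields
\[
[X^\m, Y^\m]^\m = [A, B]^{\R Z} + 2(aJB - bJA), \qquad [X, Y]^\m = [X^\m, Y^\m]^\m + [v, w]^\m,
\]
and hence the key identity $[X^\m, Y^\m]^\m = [X, Y]^\m - [v, w]^\m$.

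By the triangle inequality, $|[X^\m, Y^\m]^\m| \leq |[X, Y]| + |[v, w]^\m|$, so the theorem reduces to establishing the bound $|[v, w]^\m| \le C\cdot|[X, Y]|$ for some constant $C > 0$ and all $X, Y \in \p$. Since both sides are polynomial in $X, Y$ and homogeneous of the same degree, a standard compactness argument on the unit sphere of $\p \oplus \p$ shows that this bound is equivalent to the pointwise vanishing statement: \emph{if $X, Y \in \p$ satisfy $[X, Y] = 0$, then $[v, w]^\m = 0$}. Splitting $[X, Y] = 0$ into its $\s$- and $\k$-components and using $[\s,\s]\subseteq\k$ gives the two constraints
\[
Mw = Nv \qquad\text{and}\qquad [M, N] = -[v, w], \qquad M := X^\m,\ N := Y^\m \in \m.
\]

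The main obstacle is the final algebraic step: deducing $[v, w] \in \h = \su(n)$ from these two constraints. The structural ingredients are: (i) the operator $[v, w]$ acts on $z \in \R^{2n}$ by $z \mapsto (v \cdot z)w - (w \cdot z)v$, so it has rank at most two with image in $\operatorname{span}(v, w)$; (ii) each $M = A + aZ \in \m$ decomposes as the sum of a $\C$-antilinear operator ($A \in \m_1$) and a $\C$-linear scalar operator ($aZ = aJ$), so $[M, N] = [A, B] + 2(aJB - bJA)$ splits into a $\C$-linear summand $[A, B] \in \u(n)$ and a $\C$-antilinear summand $2(aJB - bJA) \in \m_1$; (iii) the equation $Mw = Nv$ couples the $\C$-linear and $\C$-antilinear parts of $M, N$ when applied to $v$ and $w$. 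Matching the $\C$-linear and $\C$-antilinear parts of $[M, N] = -[v, w]$, and combining these with $Mw = Nv$ and the rank-$\le 2$ structure in (i), one verifies that the $\m_1$-component of $[v, w]$ must vanish (so $[v, w] \in \u(n)$) and its $\u(n)$-component must have vanishing complex trace. This forces $[v, w] \in \su(n) = \h$, completing the proof.
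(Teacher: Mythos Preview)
Your argument has two genuine gaps, and together they leave the theorem essentially unproved.

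\textbf{The compactness reduction is invalid.} You claim that because both $|[v,w]^{\m}|$ and $|[X,Y]|$ arise from bilinear maps of the same homogeneity, a ``standard compactness argument on the unit sphere'' shows that the bound $|[v,w]^{\m}|\le C|[X,Y]|$ is equivalent to the vanishing statement $[X,Y]=0\Rightarrow [v,w]^{\m}=0$. This is not so: for continuous nonnegative $f,g$ on a compact set, $g=0\Rightarrow f=0$ does \emph{not} imply $f\le Cg$ (take $f(x)=|x|$, $g(x)=x^2$ on $[-1,1]$). Homogeneity does not help; one can have $(X_n,Y_n)$ on the unit sphere with $[X_n,Y_n]\to 0$ and $[v_n,w_n]^{\m}\to 0$ while the ratio blows up. Condition~(\ref{condition}) is a quantitative inequality, and neither this paper nor~\cite{ST} supplies an equivalence with the pointwise vanishing; note that the paper only uses vanishing to \emph{disprove} (\ref{condition}), and proves the positive instance (Theorem~\ref{ThmSUSOSO}) by explicit estimation.

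\textbf{The final algebraic step is asserted, not proved.} Even granting the reduction, the sentence ``one verifies that the $\m_1$-component of $[v,w]$ must vanish \dots'' is carrying the entire content of the theorem. The constraints $Mw=Nv$ and $[M,N]=-[v,w]$ split as $[A,B]=-[v,w]^{\u(n)}$ and $2(aJB-bJA)=-[v,w]^{\m_1}$, and you must combine these with $Aw+aJw=Bv+bJv$ and the rank-$\le 2$ structure of $[v,w]$ in a nontrivial way. For instance, with $n=2$ and $A=D$, $B=E$, $a=b=0$ one gets $[A,B]=2J\in\R J$, and it is only the rank restriction that rules out a matching $[v,w]$; the general interaction is not obvious, and you give no argument.

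By contrast, the paper's proof is a direct estimate: after a rotation in the $(X,Y)$-plane to arrange $Y^{\z}=0$ and an $\Ad_H$-conjugation placing $Y^{\m_0}$ in a maximal abelian subspace of $\m_0$, one projects onto a block-diagonal subalgebra $\k_1\cong \lfloor n/2\rfloor\cdot\so(4)$, reducing everything to the case $n=2$, which is then settled by an explicit computation yielding $|[U,V]^{\k'}|^2\ge \tfrac12|[U^{\m'},V^{\m'}]^{\m'}|^2$. This produces an honest constant and makes no appeal to any vanishing-to-bound implication.
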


\begin{proof}
Let $n \ge 2$, $\g = \so(2n+1)$  and $\k = \so(2n)$. We introduce a complex
structure on $\R^{2n}$ by
\begin{equation}\label{EqJ}
J = \begin{pmatrix}
      \fbox{$\begin{smallmatrix}0 & -1\\1&\ \ 0 \end{smallmatrix}$} &&&\\
      &\fbox{$\begin{smallmatrix}0 & -1\\1&\ \ 0 \end{smallmatrix}$} &&\\
      &&\ddots&\\
      &&&\fbox{$\begin{smallmatrix}0 & -1\\1&\ \ 0 \end{smallmatrix}$} \\
    \end{pmatrix}.
\end{equation}
Then $\u(n) = \{ A \in \so(2n) \mid JA = AJ \},$ the subset of all matrices in
$\so(2n)$ which commute with~$J$. The orthogonal complement $\m_0$ of $\u(n)$
in $\so(2n)$ is then $\m_0 = \{ M \in \so(2n) \mid JMJ=M \},$ the subset of all
matrices in $\so(2n)$ which anticommute with~$J$. The one-dimensional ideal
which is the center of $\u(n)$ is $\R J =: \z$, that is, $\u(n) = \su(n) \oplus
\z$. The orthogonal complement of $\h$ in $\k$ is $\m = \m_0 \oplus \z$. For
the orthogonal complement $\s$ of $\k$ in~$\g$, we have $[\s,\s] \subseteq \k$.

Let $X^\m, Y^\m \in \m$ and $X^\s, Y^\s \in \s$. Define $X := X^\m +X^\s$, $Y
:= Y^\m +Y^\s$ in $\p = \m  \oplus \s$.
Let $W \in \so(2n)$ be a matrix of rank two and let $x \in \R^{2n}$ be a unit
vector such that $x \in (\ker W)^\perp$. Let $\lambda :=  |W(x)|$ and let $y :=
W(x) / \lambda$. Since $W$ is skew symmetric, we have $\im (W) = (\ker
W)^\perp$ and $W(v) \perp v$ for all $v \in \R^{2n}$. Hence $(\ker W)^\perp$ is
spanned by the orthonormal vectors $x$ and $y$. It follows that
\[
W = \lambda(yx^t -xy^t).
\]
Define $S(x,y) := yx^t -xy^t$ for $x,y \in \R^{2n}$. We have seen that any rank
two matrix in $\so(2n)$ is given by $S(x,y)$ for some $x,y \in \R^{2n}$. In
particular, we have $[X^\s,Y^\s] = S(x,y)$  for some $x,y \in \R^{2n}$.

For $t \in \R$ we may define the pair $(\tilde X,\tilde Y) \in \p \times \p$ by
\[
 \tilde X := \cos(t)X + \sin(t)Y, \quad
 \tilde Y := \cos(t)Y - \sin(t)X.
\]
Then we have $ [\tilde X,\tilde Y] = [X,Y] $ and $[\tilde X^\m,\tilde Y^\m]^\m
= [X^\m,Y^\m]^\m$. In particular, by choosing $t$ suitably, we have $\tilde
Y^\z =  \cos(t)Y^\z - \sin(t)X^\z=0$. Dropping the tildes, we may therefore
assume that $ Y^\z = 0 $ and hence $Y^\m = Y^{\m_0}$.

Recall that $\SO(2n) / \U(n)$ is a symmetric space of rank~$r := \lfloor \frac
n2 \rfloor$, see~\cite{Hel}. Indeed, conjugation by the matrix $J$ defines an
involutive automorphism of~$\k = \so(2n)$ whose $(+1)$-eigenspace is $\h \oplus
\z$ and whose $(-1)$-eigenspace is $\m_0$. Let
\[
D := \begin{pmatrix}
  0 & 0 & 1 & 0 \\
  0 & 0 & 0 & -1 \\
  -1 & 0 & 0 & 0 \\
  0 & 1 & 0 & 0 \\
\end{pmatrix}.
\]
Consider the maximal abelian subalgebra $\La \subset \m_0$ defined as follows.
For $n$~even, define $\La$ to be the space of all skew symmetric real $2n
\times 2n$-matrices of the form
\begin{equation}\label{EqMaxAbEven}
\begin{pmatrix}
t_1D & & & \\
 & t_2D & & \\
 & & \ddots & \\
 & & &  t_rD \\
\end{pmatrix}\end{equation}
where $t_1, \ldots, t_r \in \R$. If $n$ is odd, define $\La$ to be the space of
all skew symmetric real $2n \times 2n$-matrices of the form
\begin{equation}\label{EqMaxAbOdd}
\left(\begin{array}{cccc|cc}
 t_1D & & & & & \\
 & t_2D & & & &\\
 & & \ddots & &  & \\
 & & & t_rD && \\ \hline
& & & & 0 & 0\\
& & & & 0 & 0\\
\end{array}\right),
\end{equation}
where $t_1, \ldots, t_r \in \R$. Since the isometric action of the group $H$
on~$\g$ given by restriction of the adjoint representation of~$G$ leaves
$\m_0$, $\z$ and $\s$ invariant, we may replace $(X,Y)$ by $(\Ad_{h}(X),
\Ad_{h}(Y))$ for any $h \in H$ without limitation of generality, since we have
\begin{align*}
\left| [\Ad_{h}(X),\Ad_{h}(Y)] \right|
= \left|\Ad_h([X,Y])\right|= \left|[X,Y]\right|
\end{align*}
and
\begin{align*}
\left|[\Ad_{h}(X)^\m,\Ad_{h}(Y)^\m]^\m\right| &=
\left|[\Ad_{h}(X^\m),\Ad_{h}(Y^\m)]^\m\right| \\ &=
\left|\Ad_{h}([X^\m,Y^\m])^\m\right| \\ &= \left|[X^\m,Y^\m]^\m\right|.
\end{align*}
It follows from the theory of symmetric spaces~\cite{Hel} that the subspace
$\La$ intersects all orbits of the $H$-action on~$\m_0$, in particular, there
is an element $h \in H$ such that $\Ad_h(Y^\m) \in \La$ and we may henceforth
assume $Y^\m \in \La$, i.e. $Y^\m$ is of the form (\ref{EqMaxAbEven}) or
(\ref{EqMaxAbOdd}). We define a subalgebra of $\k$ isomorphic to $r \cdot
\so(4)$ as follows. If $n$ is even, define
\[
\k_1 := \left \{ \left.
\begin{pmatrix}
A_1 & & & \\
 & A_2 & & \\
 & & \ddots & \\
 & & & A_r \\
\end{pmatrix} \in \so(4r)
\right| A_1, \ldots, A_r \in \so(4)
\right\}.
\]
If $n$ is odd, define
\[
\k_1 := \left \{ \left.
\left(\begin{array}{cccc|cc}
A_1 & & & & & \\
 & A_2 & & & &\\
 & & \ddots & &  & \\
 & & & A_r && \\ \hline
& & & & 0 & 0\\
& & & & 0 & 0\\
\end{array}\right) \in \so(4r+2)
\right| A_1, \ldots, A_r \in \so(4)
\right\}.
\]
Let $P \colon \k \to \k_1$ be the orthogonal projection from $\k$ onto $\k_1$.
Note that $X^\z, Y^\m \in \k_1$.  Furthermore, the action of $\k_1$ on $\k$
leaves $\k_1$ and its orthogonal complement invariant, hence we have
$P([V,Y^\m]) = [P(V),Y^\m]$ for all $V \in \k$. We have
\begin{align}\label{EqBE}
\left| [X,Y] \right| & \ge
\left| [X,Y]^\k \right| \ge
\left| P ([X,Y]^\k) \right| =
\left| P ([X^\m,Y^\m]  + [X^\s,Y^\s]) \right| = \nonumber \\
&=\left| P ([X^\z,Y^\m]  + [X^{\m_0},Y^\m]  + [X^\s,Y^\s]) \right| =\nonumber  \\
&=\left| [X^\z,Y^\m]  + [P(X^{\m_0}),Y^\m]  + P([X^\s,Y^\s]) \right|
\end{align}
Now let $P_1, \ldots, P_r \colon \k \to \so(4)$ be the orthogonal projections
onto the direct summands of $\k_1$ isomorphic to $\so(4)$ such that
$P_{\ell+1}$ maps a matrix $A=(a_{ij}) \in \k$ to its $4 \times 4$-submatrix
\begin{equation*}
\begin{pmatrix}
  a_{4\ell+1,\, 4\ell+1} & \dots & a_{4\ell+1,\, 4\ell+4} \\
  \vdots &  & \vdots \\
  a_{4\ell+4,\, 4\ell+1} & \dots & a_{4\ell+4,\, 4\ell+4} \\
\end{pmatrix}.
\end{equation*}
Let $J_1 = P_1(J)=\ldots=P_r(J)$ be the complex structure on $\R^4$ defined by
(\ref{EqJ}) in the case $n=2$. From (\ref{EqBE}) we have
\begin{align}\label{EqBE2}
\left| [X,Y]^\k \right|^2 & \ge \sum_{\nu=1}^r   \left| [cJ_1,P_\nu(Y^m)] +
 [P_\nu(X^{\m_0}),P_\nu(Y^\m)] + P_\nu([X^\s,Y^\s]) \right|^2.
\end{align}
Since $P_\nu([X^\s,Y^\s])$ is a matrix of rank two or zero, we have
$P_\nu([X^\s,Y^\s]) = [U^{\s'},V^{\s'}] = S(x,y)$ for two vectors $x
=(x_1,x_2,x_3,x_4),\ y=(y_1,y_2,y_3,y_4) \in \R^4.$ We claim that there is a
constant $C>0$ such that \[\left| [X,Y]^\k \right|^2 \ge C^2 \cdot \left|
[X^\m,Y^\m]^\m \right|^2\] for all $X,Y \in \p$. Each of the $r$ summands on
the right hand side in (\ref{EqBE2}) is of the form
\[
 \left| [U^{\z'},V^{\m'}] +  [U^{\m_0'},V^{\m'}] + [U^{\s'},V^{\s'}] \right|^2 =
\left| [U,V]^{\k'} \right|^2
\]
where $U,V \in \p' = \m'+\s'$ is a pair of vectors for the triple
\[
(H',K',G')=(\SU(2),\SO(4),\SO(5))
\]
where $\g'=\k'\oplus\s'$ and $\k'=\h'\oplus\m'$ are orthogonal decompositions
and where $\z$ is spanned by $J_1$. Hence it suffices to verify the claim for
the case $n=2$. We have $U^{\z'}=cJ_1$ and $V^{\m'} = \tau D$ for some $\tau
\in \R$. Consequently $[U^{\z'},V^{\m'}]  = t E$, where
\[
E :=
\begin{pmatrix}
  0 & 0 & 0 & 1 \\
  0 & 0 & 1 & 0 \\
  0 & -1 & 0 & 0 \\
  -1 & 0 & 0 & 0 \\
\end{pmatrix},
\]
and $t=2c\tau$. Furthermore, using the fact that $\m_0'$ is spanned by $D$ and
$E$, it is easy to verify that for an arbitrary matrix $U^{\m_0'}\in\m_0'$ we
have $[U^{\m_0'},V^{\m'}] =sF$, where
\[
F :=
\begin{pmatrix}
  0 & 1 & 0 & 0 \\
  -1 & 0 & 0 & 0 \\
  0 & 0 & 0 & 1 \\
  0 & 0 & -1 & 0 \\
\end{pmatrix}
\]
and $s \in \R$. After choosing the vectors $U^{\z'},U^{\m_0'},V^{\m'}$, the
real numbers $t$ and $s$ are uniquely determined. Define $W :=
[U^{\s'},V^{\s'}] = $
\[
 \left( \begin {array}{cccc} 0&{ y_1}\,{ x_2}-{ y_2}\,{
 x_1}&{ y_1}\,{ x_3}-{ y_3}\,{ x_1}&{ y_1}\,{
 x_4}-{ y_4}\,{ x_1}\\\noalign{\medskip}{ y_2}\,{
x_1}-{ y_1}\,{ x_2}&0&{ y_2}\,{ x_3}-{ y_3}\,{
 x_2}&{ y_2}\,{ x_4}-{ y_4}\,{ x_2}
\\\noalign{\medskip}{ y_3}\,{ x_1}-{ y_1}\,{ x_3}&{
 y_3}\,{ x_2}-{ y_2}\,{ x_3}&0&{ y_3}\,{ x_4}-
{ y_4}\,{ x_3}\\\noalign{\medskip}{ y_4}\,{ x_1}-{
y_1}\,{ x_4}&{ y_4}\,{ x_2}-{ y_2}\,{ x_4}&{
y_4}\,{ x_3}-{ y_3}\,{ x_4}&0\end {array} \right)
\]
We will finish the proof by showing that there is a constant $C>0$ such that
\[
|[U,V]^{\k'}|^2 \ge C^2 |[U^{\m'},V^{\m'}]^{\m'}|^2 .
\]
Using the Euclidean scalar product given by $\langle A, B \rangle = \tr(A^tB)$
and the corresponding  norm on $\R^{4 \times 4}$, we have
\[
 \left|[U^{\m'},V^{\m'}]^{\m'}\right|^2 = \left| tE+sF \right|^2 = 4(s^2+t^2).
\]
Assume that $y=(y_1,y_2,y_3,y_4) \in \R^4$ is a fixed unit vector. We choose
the orthonormal basis
\[
y,\, e_1 :=(-y_4,-y_3,y_2,y_1),\, e_2 :=  (-y_2,y_1,-y_4,y_3),\, e_3 :=(y_3,-y_4,-y_1,y_2)
\]
of $\R^4$, and write $x = a_0 y + a_1 e_1 + a_2 e_2 + a_3 e_3$ with $a_j \in
\R$. The squared length of $W$ is $ \tr((yx^t-xy^t)^t(yx^t-xy^t)) = 2
\left|x\right| \left|y\right| - 2\langle x,y\rangle = 2(a_1^2+a_2^2+a_3^2). $
The length of the orthogonal projection of $W$ on the linear subspace spanned
by $E$ is \[\left| y_1x_4 -y_4x_1+y_2x_3-y_3x_2 \right|=
\left|(-y_4,-y_3,y_2,y_1)(x_1,x_2,x_3,x_4)^t \right| = \left|a_1\right|.\] The
length of the orthogonal projection of $W$ on the linear subspace spanned by
$F$ is \[\left| y_1x_2 -y_2x_1+y_3x_4-y_4x_3 \right| = \left| (-y_2,
y_1,-y_4,y_3)(x_1,x_2,x_3,x_4)^t \right| = \left| a_2 \right|.\] We have, since
$|sE|=2s$ and $|tF|=2t$,
\begin{align*}
\left|[U,V]^{\k'}\right|^2 &=a_1^2+a_2^2+2a_3^2
 +\left(a_1\pm 2s\right)^2 +\left( a_2\pm 2t\right)^2 \\
&= 2a_1^2+2a_2^2+2a_3^2 \pm 4a_1s \pm 4a_2t +4s^s +4t^2 \\
&=2a_3^2 +2\left(a_1 \pm s \right)^2 +2\left(a_2 \pm t \right)^2 +
2s^2 +2t^2  \\
&\ge 2 (s^2 +t^2)  = \frac12 \left| sE + tF \right|^2
 = \frac12 \left| [U^{\m'},V^{\m'}]^{\m'}\right|^2.
\end{align*}
This finishes the proof.
\end{proof}


\section{Regular subgroups}
\label{Regular}

A closed subgroup~$K$ of a compact Lie group~$G$ is called a \emph{regular
subgroup} if $\rk (C_G(K)) = \rk(G) - \rk(K) + \rk(Z(K)),$ where $C_G(K)$ is
the centralizer of~$K$ in~$G$ and $Z(K)$ is the center of~$K$.  In this case,
we also call the Lie algebra~$\k$ of~$K$ a \emph{regular subalgebra} of the Lie
algebra~$\g$ of~$G$. When $K \subseteq G$ is a regular subgroup,  there exists
a maximal torus~$T$ of~$G$ with Lie algebra~$\t$ such that $\k$ is spanned by a
subset of~$\t$ and the real root spaces $\g_{\alpha}^\R$, $\alpha \in S$, where
$S$ is a symmetric and closed subsystem of the root system~$R$ of~$G$. Consider
a chain of compact Lie groups $H \subsetneq K \subsetneq G$ where $K$ is a
regular subgroup of~$G$. Then $H$ is regular subgroup of $G$ if and only if it
is a regular subgroup of~$K$.

\begin{lemma}\label{LmRegular}
Let $H \subsetneq K \subsetneq G$ be compact Lie groups such that $G$ is simple
and $H$, $K$ are regular subgroups. Let $T_1$ be a maximal torus of~$C_K(H)$.
Assume that $(K, H \cdot T_1)$ is not a symmetric pair. Then there exist
elements $X,Y \in \p$ such that $[X,Y]=0$ and $[X^{\m},Y^{\m}]^{\m}\neq0$.
\end{lemma}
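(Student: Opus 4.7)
The idea is to adapt the root-space construction of Section~\ref{AClassEx} to the regular setting. Since $H$ and $K$ are regular in $G$, there is a maximal torus $T$ of $G$ that normalizes both, giving
\[
\k = \t_\k \oplus \bigoplus_{\alpha\in R_K}\g_\alpha^\R, \qquad
\h = \t_\h \oplus \bigoplus_{\alpha\in R_H}\g_\alpha^\R,
\]
with $\t_\h \subseteq \t_\k \subseteq \t$ and closed symmetric subsystems $R_H \subseteq R_K \subseteq R$. Maximality of $T_1$ inside $C_K(H)$ implies $\t_\h + \t_1 = \t_\k$, so $\h + \t_1 = \t_\k \oplus \bigoplus_{\alpha \in R_H}\g_\alpha^\R$ and its orthogonal complement in $\k$ is $\m_1 = \bigoplus_{\alpha \in R_\m}\g_\alpha^\R$, where $R_\m := R_K \setminus R_H$. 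The hypothesis that $(K, H\cdot T_1)$ is not a symmetric pair becomes $[\m_1, \m_1] \not\subseteq \h + \t_1$, which by the root decomposition is equivalent to the existence of $\lambda, \mu \in R_\m$ with $\nu := \lambda + \mu \in R_\m$.

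The roots $\lambda, \mu$ generate an irreducible rank-two subsystem $R(\lambda,\mu) \subseteq R$, of type $\LA_2$, $\LB_2$, or (only when $G = \LG_2$) $\LG_2$. Mirroring the case analysis of Section~\ref{AClassEx}, I would select linearly independent $\alpha, \beta \in \{\lambda, \mu, \nu\}$ with $\alpha - \beta \in R_\m$ and such that neither $\alpha + \beta$ nor $2\alpha - \beta$ is a root of $R$; the $\LG_2$ case requires a short separate treatment analogous to Lemma~\ref{LmRankThree}(6). I would then set
\[
X := X_\alpha + H, \qquad Y := X_\beta + \eta\,Y_{\alpha-\beta},
\]
exactly as in Section~\ref{AClassEx}. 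Since the bracket identities there depend only on structure constants of $\g$ and on $\alpha + \beta$ and $2\alpha - \beta$ not being roots, the same computation gives $[X^\m, Y^\m]^\m = \tfrac12 N_{\alpha,-\beta}\,X_{\alpha-\beta} \neq 0$ (because $\alpha - \beta \in R_\m$ implies $X_{\alpha-\beta} \in \m$) and reduces $[X, Y] = 0$ to
\[
\beta(H)\bigl(\alpha(H) - \beta(H)\bigr) = \tfrac14 N_{\alpha,-\beta}\,N_{\alpha,\beta-\alpha},\qquad
\eta = -\frac{2\beta(H)}{N_{\alpha,\beta-\alpha}}.
\]

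The principal new difficulty, compared to Section~\ref{AClassEx}, is that for $X, Y \in \p$ the torus element $H$ must lie in $\t \ominus \t_\h = (\t_\k \ominus \t_\h) \oplus \t_\s$, rather than in all of $\t$. Thus the main obstacle is to show that the quadratic above is solvable within this restricted subspace. I would accomplish this by verifying that the restrictions $\alpha|_{\t\ominus\t_\h}$ and $\beta|_{\t\ominus\t_\h}$ are linearly independent, so that the equation is a nondegenerate quadratic in two real variables and hence has a solution. If these restrictions degenerate in some configuration, I would invoke Remark~\ref{RemPrimeTriple} to reduce to a smaller triple $H' \subsetneq K' \subsetneq G'$ obtained by intersecting with the semisimple part of the subalgebra of~$\g$ spanned by $\t$ and the real root spaces indexed by $R(\lambda,\mu)$ together with an auxiliary root $\nu' \in R \setminus R_K$ (which exists because $G$ is simple and $K \subsetneq G$), thereby replacing the original triple by a smaller one where either the quadratic becomes solvable or we fall directly into one of the small cases of Lemma~\ref{LmRankThree}. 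This independence/reduction step, together with the $\LG_2$ subcase, is where I expect the bulk of the technical work to lie.
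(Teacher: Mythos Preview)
Your main construction does not work in the regular setting, and the failure is structural rather than a matter of solving a quadratic. In Section~\ref{AClassEx} the key feature is that for each root~$\gamma$ the vector $Y_\gamma$ lies in~$\s$, while $X_\gamma$ lies in~$\k$. In the regular setting this dichotomy disappears: the entire real root space $\g_\gamma^\R$ sits in $\h$, $\m$, or~$\s$ according to whether $\gamma\in R_H$, $R_\m$, or $R_\s$. Since you choose $\alpha,\beta,\alpha-\beta\in R_\m$, the vector $Y_{\alpha-\beta}$ lies in~$\m$, so your $Y=X_\beta+\eta Y_{\alpha-\beta}$ has $Y^\s=0$. Moreover, for every $\gamma\in R_K$ the bracket $[X_\gamma,Y_\gamma]$ is a nonzero multiple of the vector in~$\t$ dual to~$\gamma$, hence lies in $\t_\k$; consequently $\gamma$ vanishes on $\t\ominus\t_\k$. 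Writing $H=H_1+H_2$ with $H_1\in\t_\k\ominus\t_\h\subset\m$ and $H_2\in\t\ominus\t_\k\subset\s$, it follows that $[H_2,Y]=0$. Thus $[X,Y]=[X^\m,Y^\m]$, and the two conditions $[X,Y]=0$ and $[X^\m,Y^\m]^\m\neq0$ are incompatible for these vectors. Your statement that ``the same computation gives $[X^\m,Y^\m]^\m=\tfrac12 N_{\alpha,-\beta}X_{\alpha-\beta}$'' is therefore wrong: it presupposes $X^\m=X_\alpha$ and $Y^\m=X_\beta$, which is false here.

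Your fallback paragraph gestures at the right remedy---bring in a root $\nu'\in R_\s$ and pass to a smaller triple---but this is exactly the content of Section~\ref{FullRank}, not an auxiliary patch. The paper's proof bypasses all of this: take a maximal torus $T_2$ of $C_G(K)$, set $H'=H\cdot T_1\cdot T_2$, $K'=K\cdot T_2$, $G'=G$, observe $\rk(H')=\rk(K')=\rk(G')$, and apply Corollary~\ref{CorFullRank} together with Remark~\ref{RemPrimeTriple}. The whole lemma is then three lines.
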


\begin{proof}
Let $T_2$ be a maximal torus of~$C_G(K)$. Let $H' := H \cdot T_1 \cdot T_2$,
let $K' := K \cdot T_2$, let $G' := G$. Then $\rk(H') = \rk(K') = \rk(G')$. Now
apply Corollary~\ref{CorFullRank} and Remark~\ref{RemPrimeTriple} to the chain
$H' \subsetneq K' \subsetneq G'$.
\end{proof}

\begin{proposition}\label{PropHermitian}
Let $K$ be a simple compact Lie group and let $H \subset K$ be such that
$C_K(H)$ is of positive dimension and $(K,H \cdot T_1)$ is a symmetric pair for
a maximal torus $T_1 \subseteq C_K(H)$. Then $K / (H \cdot T_1)$ is a Hermitian
symmetric space and $T_1$ is one-dimensional.
\end{proposition}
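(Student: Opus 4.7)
The plan is to identify the center of $\l := \h + \t_1$ with $\t_1$ itself, and then invoke the classical characterization of Hermitian symmetric spaces among irreducible compact symmetric spaces.

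First I would establish that $\z(\l) = \t_1$. The inclusion $\t_1 \subseteq \z(\l)$ is immediate: $\t_1$ is abelian and, since $T_1 \subseteq C_K(H)$, it commutes with $\h$, hence with all of $\l$. Conversely, any $Z \in \z(\l)$ centralizes $\h \subseteq \l$, so $Z$ lies in the centralizer $\mathfrak{c}_\k(\h)$ of $\h$ in $\k$. As an abelian subalgebra of the compact Lie algebra $\mathfrak{c}_\k(\h)$, the center $\z(\l)$ is toral; since it contains the maximal torus $\t_1$ of $\mathfrak{c}_\k(\h)$, maximality forces $\z(\l) = \t_1$.

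Since $C_K(H)$ has positive dimension by hypothesis, $\t_1 \neq 0$, and hence $L := H \cdot T_1$ has nontrivial connected center. I would then apply the classical classification (see Helgason, \emph{Differential Geometry, Lie Groups, and Symmetric Spaces}, Ch.~VIII): for an irreducible compact symmetric pair $(K,L)$ with $K$ simple, the space $K/L$ is Hermitian symmetric if and only if $Z(L)^0$ has positive dimension, in which case $\dim Z(L)^0 = 1$. Applied to our situation, this yields that $K/L$ is Hermitian symmetric and $\dim T_1 = \dim \z(\l) = 1$, as desired.

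The main obstacle is the invocation of the Hermitian classification, which is standard but carries all the geometric content. The one-dimensionality of the isotropy's center in an irreducible Hermitian symmetric space ultimately comes from the fact that the $\Ad$-action of $Z(L)^0$ on $\k/\l$ must preserve the invariant complex structure $J$ and act transitively (via rotations) on lines through the origin up to $J$-linearity, forcing $Z(L)^0$ to reduce to a single copy of $\U(1)$.
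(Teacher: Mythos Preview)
Your proof is correct and takes essentially the same approach as the paper. The paper's own proof is the one-liner ``This follows from the classification of symmetric spaces [Hel]''; you have simply supplied the details of how that classification applies, namely by showing $\z(\l) = \t_1$ and then invoking the standard characterization of irreducible Hermitian symmetric spaces via the center of the isotropy.
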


\begin{proof}
This follows from the classification of symmetric spaces \cite{Hel}.
\end{proof}

\begin{remark}\label{RemNotInZ}
Let $H$, $K$ and $T_1$ be as in Proposition~\ref{PropHermitian}. Let $\k = \h
\oplus \t_1 \oplus \m_0$ be an orthogonal decomposition and let $\m=\t_1 \oplus
\m_0$. (It follows that $H \subset K$ is a regular subgroup~\cite{Hel}.) Let
$T_0$ be a maximal torus of $H$. Then $T := T_0 \cdot T_1$ is a maximal torus
of~$K$. Consider a root space decomposition of~$\k$ and let $R_K$ denote the
set of roots. Let $R_H$ denote the subsystem corresponding to the subgroup~$H$.
The set $R_H$ consists of all roots in $R_K$ which vanish on~$\t_1$. Hence it
follows that the projection of $[X_\beta,Y_\beta]$ on $\t_1$ is non-zero for
all $\beta \in R_\m := R_K \setminus R_H$.
\end{remark}

\begin{proposition}\label{PropRegular}
Let $G$ be a compact Lie group. Let $H \subsetneq K \subsetneq G$ be connected
compact Lie groups such that $H$, $K$ are regular subgroups. Assume that for
each simple ideal~$I$ of~$\g$ the condition~(\ref{condition}) holds for the
triple of Lie algebras $(I \cap \h, I \cap \k, I)$. Then (\ref{condition}) also
holds for $(H,K,G)$.
\end{proposition}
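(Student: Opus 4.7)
Plan: Write $\g = \z \oplus \g_1 \oplus \cdots \oplus \g_m$ (center plus simple ideals), and let $\pi_i \colon \g \to \g_i$ denote orthogonal projection. Because $H, K$ are regular, we may choose a maximal torus $T \subseteq G$ with respect to which both $\h$ and $\k$ have root space decompositions. Setting $\t_i := \t \cap \g_i$, we obtain orthogonal decompositions $\k = \k_\t \oplus \bigoplus_i \k_i^{\mathrm{nt}}$ and $\h = \h_\t \oplus \bigoplus_i \h_i^{\mathrm{nt}}$, where $\k_\t = \k \cap \t$, $\h_\t = \h \cap \t$, and $\k_i^{\mathrm{nt}}$ (resp.\ $\h_i^{\mathrm{nt}}$) is the sum of real root spaces of $\k$ (resp.\ $\h$) contained in $\g_i$. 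Consequently $\m = \m_\t \oplus \bigoplus_i \m_i^{\mathrm{nt}}$ with $\m_\t := \k_\t \ominus \h_\t$ and $\m_i^{\mathrm{nt}} := \k_i^{\mathrm{nt}} \ominus \h_i^{\mathrm{nt}}$. First I would verify that for any $X \in \p$, the projection $\pi_i(X)$ lies in $\p_i' := \g_i \ominus (\h \cap \g_i)$, so that writing $B_i := [\pi_i(X), \pi_i(Y)] \in \g_i$ we have $[X,Y] = \sum_i B_i$ and hence $|[X,Y]|^2 = \sum_i |B_i|^2$.

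Next, expanding $X^\m$ along the decomposition and computing, $[X^\m, Y^\m] = \sum_i A_i$ with each $A_i \in \g_i$, and
\[ [X^\m, Y^\m]^\m = \sum_i A_i^{\m_i^{\mathrm{nt}}} + \Bigl(\sum_i \tau_i\Bigr)^{\m_\t}, \]
where $\tau_i \in \t_i$ is the toral part of $[\pi_i(X)^{\m_i^{\mathrm{nt}}}, \pi_i(Y)^{\m_i^{\mathrm{nt}}}]$. The critical step is the identification $A_i = [\pi_i(X)^{\m_i'}, \pi_i(Y)^{\m_i'}]$ for the factor-wise triple $(\h \cap \g_i, \k \cap \g_i, \g_i)$, whose $\m$-complement is $\m_i' = \hat{\m}_\t^i \oplus \m_i^{\mathrm{nt}}$ with $\hat{\m}_\t^i := (\k_\t \cap \t_i) \ominus (\h_\t \cap \t_i)$. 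Globally, the toral element entering $A_i$ is $\pi_i(P_{\k_\t}(X^\t))$, while factor-wise it is $P_{\k_\t \cap \t_i}((X^\t)_i)$; these generally differ in $\t_i$. But the coroot $\beta^\vee$ of every root $\beta$ of $\k$ inside $\g_i$ lies in $\k_\t \cap \t_i$, so the two toral elements have the same pairing with $\beta^\vee$, i.e., $\beta$ takes the same value on both. Since the adjoint action of a torus element on the root space $\g_\beta^\R$ depends only on $\beta$ applied to it, the two toral elements induce identical brackets on $\m_i^{\mathrm{nt}}$, yielding the desired identification.

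For the toral summand, $\tau_i$ lies in $\k_\t \cap \t_i$ as a sum of coroots of roots of $\k$ in $\g_i$. Decomposing $\tau_i = \tau_i^{\h_\t \cap \t_i} + \tau_i^{\hat{\m}_\t^i}$ inside $\k_\t \cap \t_i$, the first summand lies in $\h_\t$, which is orthogonal to $\m_\t$, so it vanishes under $(\cdot)^{\m_\t}$. Hence $|(\sum_i \tau_i)^{\m_\t}|^2 \le \sum_i |\tau_i^{\hat{\m}_\t^i}|^2$ by orthogonality of the $\t_i$ and the contractive property of orthogonal projection. Applying the factor-wise hypothesis,
\[ |A_i^{\m_i^{\mathrm{nt}}}|^2 + |\tau_i^{\hat{\m}_\t^i}|^2 = |[\pi_i(X)^{\m_i'}, \pi_i(Y)^{\m_i'}]^{\m_i'}|^2 \le C_i^2 |B_i|^2, \]
and summing over $i$ gives $|[X^\m, Y^\m]^\m|^2 \le (\max_i C_i)^2 |[X,Y]|^2$, proving~(\ref{condition}) with $C = \max_i C_i$. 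The main obstacle is the identification of $A_i$ above: it requires the structural fact that coroots of a regular subgroup lie in the subgroup's own maximal torus, which reconciles the two a~priori different torus projections after bracketing with root vectors.
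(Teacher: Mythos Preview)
Your argument is correct. Both your proof and the paper's rest on the same underlying observation: the obstruction to splitting $\m$ along the simple ideals is purely toral, and this toral discrepancy is invisible to brackets with root vectors of~$\k$. You make this explicit via the coroot identity $\beta^\vee \in \k_\t \cap \t_i$, which shows that $\pi_i(P_{\k_\t}(X^\t))$ and $P_{\k_\t\cap\t_i}((X^\t)_i)$ pair identically with every $\beta^\vee$ and hence act identically on $\m_i^{\mathrm{nt}}$. The paper encodes the same fact more structurally: it introduces $\h' := \bigoplus_i (\h\cap\g_i)$ and $\k' := \bigoplus_i (\k\cap\g_i)$, observes that $\k\ominus\k'$ lies in the Lie algebra~$\t_2$ of a maximal torus of $C_G(K)$, and hence that $X^\m - X^{\m'} \in \t_2$ centralizes all of~$\k$, giving $[X^\m,Y^\m]=[X^{\m'},Y^{\m'}]$ in one stroke. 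The inequality $|v^\m|\le|v^{\m'}|$ then follows from $\h'\subseteq\h$, after which the factor-wise hypothesis applies directly.

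The paper's packaging is shorter because it replaces your separate treatment of the $\m_\t$-component (the $\tau_i^{\hat\m_\t^i}$ bookkeeping) by a single monotonicity step. Your route, on the other hand, never needs to name $C_G(K)$ or verify $\m\subseteq\m'+\t_2$; it works entirely inside the root-space decomposition and makes the mechanism of cancellation completely transparent. Either way the constant obtained is $C=\max_i C_i$.
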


\begin{proof}
Let $T_2$ be a maximal torus of $C_G(K)$. Let $\g = \g_0 \oplus \g_1 \oplus
\ldots \oplus \g_n$ be a decomposition into ideals of~$\g$ such that $\g_0$ is
abelian and $\g_1, \ldots, \g_n$ are simple. Define $\k_i := \g_i \cap \k$ and
$\h_i := \g_i \cap \h$. Let $\m_i$ be the orthogonal complement of~$\h_i$
in~$\k_i$ and let $\p_i$ be the orthogonal complement of~$\h_i$ in~$\g_i$. Now
assume that $(\h_i, \k_i, \g_i)$ satisfies~(\ref{condition}) with a positive
constant~$C_i$, i.e.\ we have
\[
\left|[X^{\m_i}, Y^{\m_i}]^{\m_i}\right| \leq C_i \left|[X,Y]\right|.
\]
for $i = 1, \ldots, n$. Set $C := \max(C_1,\ldots,C_n)$. Now let $X,Y \in \p$,
where $\m$ is the orthogonal complement of~$\h$ in~$\k$ and where $\p$ is the
orthogonal complement of~$\h$ in~$\g$. Define $\h' := \h_0 + \ldots + \h_n$ and
let $\p' = \p_0 + \ldots + \p_n$ be the orthogonal complement of~$\h'$ in $\g$.
Since $\h' \subseteq \h$, it follows that $\p' \supseteq \p$ and hence $X,Y \in
\p'$. Let $\k' := \k_0 + \ldots + \k_n$ and let $\m' := \m_0 + \ldots + \m_n$.
We have $\m \subseteq \m' + \t_2$. Since $T_2 \subseteq C_G(K)$, it follows
that $ [ X^\m , Y^\m ] = [ X^{\m'} , Y^{\m'} ]. $ We write $X = X_0 + \ldots +
X_n$ and $Y = Y_0 + \ldots + Y_n$, where $X_i,Y_i \in \g_i$ and compute
\begin{align*}
|[ X^\m , Y^\m ]^\m| &\le |[ X^{\m'} , Y^{\m'} ]^{\m'}| =
|\sum_{i=0}^n [ X^{\m_i} , Y^{\m_i} ]^{\m_i}| \le\\&\le
|\sum_{i=1}^n C_i [ X_{i} , Y_{i} ]| \le
| \sum_{i=1}^n C [ X_{i} , Y_{i} ]| = C |[X,Y]|.\tag*{\qedhere}
\end{align*}
\end{proof}

We will now prove our classification result for chains of regular subgroups.

\begin{theorem}\label{ThmRegular}
Let $G$ be a compact Lie group. Let $H \subsetneq K \subsetneq G$ be connected
compact Lie groups such that $H$, $K$ are regular subgroups of~$G$. If the
triple $(H,K,G)$ satisfies condition~(\ref{condition}) then for each simple
ideal~$\g_i$ of~$\g$ one of the following is true.
\begin{enumerate}
\item \label{trivials} $\g_i \cap \k = \g_i$, i.e.\  the simple ideal
    $\g_i$ is contained in~$\k$.

\item \label{sympair} $\g_i \cap \k \neq \g_i$ and $(\g_i \cap \k, \g_i
    \cap \h)$ is a symmetric pair, possibly such that $\g_i \cap \k$ is
    contained in~$\h$.

\item \label{sochain} $\g_i \cong \so(2n+1)$, $\g_i \cap \k \cong \so(2n)$
    and $\g_i \cap \h \cong \su(n)$.

\item \label{spchain} $\g_i \cong \sp(n)$ where each but one simple ideal
    of $\g_i \cap \k$ is contained in $\h$ and the one simple ideal not
    contained in $\h$ is isomorphic to~$\sp(1)$, standardly embedded.

\item \label{gtwochain} $\g_i \cong \Lie( \LG_2 )$, $\g_i \cap \k \cong
    \so(4)$ and $\g_i \cap \h \cong \su(2)$ such that $\g_i \cap \h$ is
    contained in a subalgebra $\su(3) \subset \g_i$.
\end{enumerate}
\end{theorem}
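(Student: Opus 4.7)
The plan is to reduce to the case of simple $\g$ and then apply Lemma~\ref{LmRegular} together with Proposition~\ref{PropHermitian} to narrow down the admissible structures to the five listed cases.

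First I would reduce to the case where $\g$ is simple. Since $H$ and $K$ are regular in~$G$, the decomposition $\g = \z(\g) \oplus \g_1 \oplus \cdots \oplus \g_m$ is compatible with the root-space structure of $\h$ and $\k$, and hence with the orthogonal complements defining~$\m$ and~$\s$. Taking $X, Y$ supported in a single simple ideal $\g_i$ shows that condition~(\ref{condition}) for $(H,K,G)$ implies condition~(\ref{condition}) for the restricted triple $(\g_i \cap \h,\; \g_i \cap \k,\; \g_i)$. It therefore suffices to prove the theorem for $\g$ simple; in that case conclusion~(1) cannot occur, since $K \subsetneq G$.

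Next, assume $\g$ is simple and let $T_1$ be a maximal torus of $C_K(H)$. By Lemma~\ref{LmRegular}, condition~(\ref{condition}) forces $(K, H \cdot T_1)$ to be a symmetric pair. If $T_1$ is trivial, then $H$ has full rank in $K$ and $(K,H)$ itself is symmetric, which is conclusion~(2). Otherwise decompose $K = Z(K)^0 \cdot K_1 \cdots K_s$ with each $K_j$ simple. The involution of $(K, H \cdot T_1)$ preserves every $K_j$, and its restriction to $K_j$ is either the identity (giving $K_j \subseteq H \cdot T_1$) or a nontrivial involution producing a symmetric pair $(K_j, H_j \cdot T_{1,j})$, where $H_j := K_j \cap H$ and $T_{1,j} := K_j \cap T_1$. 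In the latter case $T_{1,j} \subseteq C_{K_j}(H_j)$ is positive-dimensional, so Proposition~\ref{PropHermitian} forces $K_j/(H_j \cdot T_{1,j})$ to be Hermitian symmetric and $T_{1,j}$ to be one-dimensional.

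The third step combines this rigidity with Borel--de Siebenthal theory. One enumerates the simple compact Lie algebras $\g$ and their regular subalgebras $\k$, and for each simple factor $\k_j$ of $\k$ identifies the irreducible Hermitian symmetric models $K_j/(H_j \cdot T_{1,j})$ that can arise. The Hermitian pair $(\SO(2n), \U(n))$ inside $\SO(2n+1)$ yields conclusion~(3) and is verified by Theorem~\ref{ThmSUSOSO}. A standardly embedded $\Sp(1)$-factor contributing the Hermitian pair $(\Sp(1), \U(1))$ in $\Sp(n)$ yields conclusion~(4). The exceptional configuration $(\SO(4), \U(2))$ with $H = \SU(2)$ contained in the $\SU(3)$-subalgebra of $\LG_2$ yields conclusion~(5). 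All other Hermitian-symmetric candidates must be ruled out by exhibiting explicit pairs $X,Y \in \p$ with $[X,Y]=0$ and $[X^\m, Y^\m]^\m \neq 0$, using the root-theoretic constructions of Section~\ref{AClassEx} together with the auxiliary low-rank examples of Lemma~\ref{LmRankThree} and Remark~\ref{RemPrimeTriple}.

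The main obstacle I expect is this final exhaustive elimination. When the Hermitian torus direction $\t_{1,j} \subset \m$ is present, commutators $[X^{\m_0}, Y^{\m_0}]^{\t_{1,j}}$ can potentially be cancelled against the $\t_{1,j}$-component of $[X^\s,Y^\s]$, as is exploited in the proof of Theorem~\ref{ThmSUSOSO}; but in most embeddings there is not enough room in $\s$ to arrange the cancellation, and a suitable root-vector construction yields a violating pair. Carefully distinguishing the cancellable configurations (which produce conclusions~(3)--(5)) from the non-cancellable ones is where the delicate casework lies.
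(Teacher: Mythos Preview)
Your reduction to simple~$\g$ and your use of Lemma~\ref{LmRegular} with Proposition~\ref{PropHermitian} are correct and match the paper's opening moves. The gap is in your third step. ``Enumerate the simple compact Lie algebras~$\g$ and their regular subalgebras~$\k$'' is not a finite task: the classical types occur in infinite families, and the number of regular subalgebras grows without bound. You acknowledge that the final elimination is the hard part, but you do not supply a mechanism that works uniformly in rank. Invoking Lemma~\ref{LmRankThree} and the constructions of Section~\ref{AClassEx} does not help directly: those results concern full-rank chains and the special class with $\rk(G/K)=\rk(G)$, respectively, neither of which covers the general regular situation here.

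The paper avoids this infinite enumeration by a local reduction you are missing. Given $\beta \in R_\m$ coming from a simple factor~$K_i$ with $(K_i, K_i\cap H)$ not symmetric, simplicity of~$\g$ guarantees an $\alpha \in R_\s$ with $\alpha+\beta \in R$; one then analyzes the rank-two subsystem $R(\alpha,\beta)$, which must be of type $\LA_2$, $\LB_2$, or~$\LG_2$. The $\LA_2$ case is killed by an explicit $\su(3)$ computation; $\LG_2$ forces $\g_i \cong \Lie(\LG_2)$ and reduces to the low-dimensional work in~\cite{kk}; the $\LB_2$ case splits according to whether~$\beta$ is short or long and whether the long roots of $R(\alpha,\beta)$ lie in~$R_K$, with the bad subcases eliminated by explicit $\so(5)$ computations. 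Only the residual $\LB_2$ configuration (all long roots in~$R_K$) survives, forcing $\g_i$ to be of type $\LB_n$, $\LC_n$, or~$\LF_4$; this is then finished off by a separate root-system lemma (Lemma~\ref{LmRoots}) rather than by a global enumeration. Also note that Theorem~\ref{ThmSUSOSO} plays no role in the necessity direction you are proving; it establishes sufficiency for item~(\ref{sochain}) and is irrelevant here.
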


\begin{proof}

Assume (\ref{condition}) holds for the triple $(H,K,G)$. Let $T_0$ be a maximal
torus of $H$. Let $T_1$ be a maximal torus of $C_K(H)$ and let $T_2$ be a
maximal torus of $C_G(K)$. Let $R$ denote the root system of $G$ with respect
to $T := T_0 \cdot T_1 \cdot T_2$. Generalizing the notation from
Section~\ref{AClassEx}, let $R_H$ and $R_K$ denote the closed symmetric subsets
of $R$ corresponding to the full rank subgroups $H \cdot T_1 \cdot T_2 \subset
G$ and $K \cdot T_2 \subset G$, respectively. Let $R_\m = R_K \setminus R_H$
and $R_\s = R \setminus R_K$.

Consider the decomposition $K = T_K \cdot K_1 \cdot \ldots \cdot K_n$  where
$T_K$ is a torus and where the $K_i$ are simple. Let $G=Z \cdot G_1 \cdot
\ldots \cdot G_m$, where $Z$ is the center of~$G$ and where $G_1,\ldots,G_m$
are the simple factors of~$G$. Since $K$ is a regular subgroup, there is a map
$f \colon \{1,\ldots,n\} \to \{1,\ldots,m\}$ such that $\k_i \subseteq
\g_{f(i)}$.

Let $\RR$ be the set of roots $\beta \in R_\m$ for which there is an $i \in
\{1,\dots,n\}$ such that $\g_{\beta}^\R \subset \k_i$, $\k_i \neq \g_{f(i)}$
and $(K_i,K_i \cap H)$ is not a symmetric pair. If the set $\RR$ is empty then
one of the first two conditions in the statement of the theorem holds for
each~$\g_i$. Thus we may assume the set $\RR$ is non-empty.

If $\beta \in \RR$ and $\g_{\beta}^\R \subset \k_i$, then we may apply
Lemma~\ref{LmRegular} and Proposition~\ref{PropHermitian} to the chain $\h \cap
\k_i \subsetneq \k_i \subsetneq \g_{f(i)}$, showing that the pair $(K_i, H \cap
K_i)$ is as described in Proposition~\ref{PropHermitian}.

Since $G_{f(i)}$ is simple and $\k_i \neq \g_{f(i)}$, it follows that for each
$\beta \in \RR$ there is at least one root $\alpha \in R_\s$ such that $\alpha
+ \beta \in R$ or $\alpha - \beta \in R$. Replacing $\beta$ with $-\beta$, if
necessary, we may assume that $\alpha + \beta \in R$. Let $\PP$ be the set of
all pairs $(\alpha,\beta)$ such that $\beta \in \RR$, $\alpha \in R_\s$ and
$\alpha + \beta \in R$. For each pair $(\alpha,\beta) \in \PP$, consider the
set $R(\alpha,\beta)$ of all linear combinations of $\alpha$ and $\beta$ with
integer coefficients. This set $R(\alpha,\beta)$ is a closed symmetric
subsystem of~$R$, thus the elements of $R(\alpha,\beta)$ form a root system of
rank two. Since $\beta \in R_\m$, $\alpha, \alpha+\beta \in R_\s$, it follows
that the root system $R(\alpha,\beta)$  is irreducible, thus of type $\LA_2$,
$\LB_2$, or $\LG_2$.

First assume that among all elements of~$\PP$ there is at least one pair
$(\alpha,\beta)$ such that $R(\alpha,\beta)$ is of type $\LA_2$. Then we have
$R(\alpha,\beta) = \{ \pm \alpha, \pm \beta, \pm (\alpha + \beta) \}$. Let
$\g'$ be the subalgebra of $\g$ generated by the vectors $X_\alpha$,
$Y_\alpha$, $X_\beta$, $Y_\beta$, $X_{\alpha+\beta}$, $Y_{\alpha+\beta}$. The
algebra~$\g'$ is isomorphic to $\su(3)$. We may choose a three-dimensional
complex representation of $\g'$ and assume that $X_\alpha = E_{12}, Y_\alpha =
iF_{12}, X_\beta = E_{23}, Y_\beta = iF_{23}, X_{\alpha+\beta} = E_{13},
Y_{\alpha+\beta} = iF_{13}$. Define $X^{\m'} = E_{23}$, $Y^{\m'} = iF_{23}$,
$X^{\s'} = E_{12}+E_{13}$, $Y^{\s'} = i(F_{12}-F_{13})$. We see that $[X^{\m'}
+X^{\s'},Y^{\m'} + Y^{\s'}]=0$, while $[X^{\m'},Y^{\m'}] = 2i(F_{22}-F_{33})$.
By Remark~\ref{RemNotInZ} this contradicts the assumption that the triple
$(H,K,G)$ satisfies (\ref{condition}).

Thus we may assume there is no pair $(\alpha,\beta) \in \PP$ such that
$R(\alpha,\beta)$ is of type $\LA_2$. Hence each such $R(\alpha,\beta)$ is of
type $\LB_2$ or $\LG_2$.

Assume $(\alpha, \beta) \in \PP$ is such that $R(\alpha,\beta)$ is of
type~$\LG_2$. Since the root system~$\LG_2$ does not occur as a proper
subsystem of any irreducible root system, it follows that the subalgebra $\g'$
generated by the vectors $X_\lambda$, $Y_\lambda$, $\lambda \in
R(\alpha,\beta)$ is a simple ideal of~$\g$ isomorphic to~$\Lie( \LG_2 )$. It
follows from Lemma~\ref{LmRegular} and Proposition~\ref{PropHermitian} that the
chain $\h \cap \g' \subsetneq \k \cap \g' \subsetneq \g'$ is as described in
part~(\ref{gtwochain}) of the theorem, since otherwise we find a pair of
commuting vectors $X, Y \in \g'$, such that $[X^\m,Y^\m]^\m \neq 0$ by the
results of~\cite[Subsection~2.4]{kk}.

Finally, we may assume that all root systems $R(\alpha,\beta)$, $(\alpha,\beta)
\in \PP$, are of type $\LB_2$.

First assume that there is at least one pair $(\alpha, \beta) \in \PP$ such
that $R(\alpha,\beta)$ is of type $\LB_2$ and such that $\beta$ is a short
root. Since $R_K \cap R(\alpha,\beta)$ is a closed symmetric subsystem of
$R(\alpha,\beta)$ it follows that $\pm \beta \in R_\m$, while $\lambda \in
R_\s$ for all $\lambda \in R(\alpha,\beta) \setminus \{\pm \beta\}$. Let $\g'$
be the subalgebra of $\g$ generated by all vectors $X_\lambda, Y_\lambda$ where
$\lambda \in R(\alpha,\beta)$ and let $\k' = \k \cap \g'$. The algebra $\g'$ is
isomorphic to~$\so(5)$ and $\k'$ is contained in the regular subalgebra $\so(3)
\oplus \so(2)$, it is isomorphic to either $\so(3)$ or $\so(3) \oplus \so(2)$.
We may choose a representation of $\g'$ on~$\R^5$ such that $E_{ij}$ with $i =
1,2$, $j = 3,4,5$ represent elements of $\s$ and $E_{23}, E_{45} \in \t$.
Define $X^{\m} = E_{12}$ and $X^{\s} = -E_{24}$, $Y^{\m} = E_{13}$ and $Y^{\s}
=E_{34}$. Then $[X,Y] = [E_{12} -E_{24}, E_{13}+E_{34}]=0$, yet
$[X^{\m},Y^{\m}] = [E_{12},E_{13}]=-E_{23}$. It follows from
Remark~\ref{RemNotInZ} that the triple $(H,K,G)$ does not satisfy
(\ref{condition}).

Now assume that for all root systems $R(\alpha,\beta)$, $(\alpha,\beta) \in
\PP$ which are of type $\LB_2$ each $\beta$ is a long root. Then there are the
following alternatives. Either among these there is at least one pair $(\alpha,
\beta) \in \PP$ such that there is a long root $\lambda \in R(\alpha,\beta)
\cap R_\s$ or all long roots in $R(\alpha,\beta)$ are contained in $R_K$ for
each $(\alpha, \beta) \in \PP$ such that $R(\alpha,\beta)$ is of type~$\LB_2$.

Consider the first case.  Let $\g'$ be the subalgebra of $\g$ generated by all
vectors $X_\lambda, Y_\lambda$ where $\lambda \in R(\alpha,\beta)$ and let $\k'
= \k \cap \g'$. The algebra $\g'$ is isomorphic to $\so(5)$ and $\k'$ is one of
the subalgebras $\su(2)$ or $\u(2)$ of $\g'$. Choose a representation of $\g'$
on~$\R^5$ such that the subalgebra $\u(2)$ is represented by the linear
combinations of the matrices $E_{23}, E_{24}+E_{35}, E_{34}-E_{25},E_{45}$.
Define $X = X^{\m} + X^{\s}$, $Y = Y^{\m} + Y^{\s}$ where $X^{\m} =
\tfrac12(E_{25} +E_{34})$ and $Y^{\m}=\tfrac12(E_{23}+E_{45})$. Take $X^{\s} =
E_{14} +\tfrac12(E_{23}-E_{45})$ and $Y^{\s} =  E_{12}
+\tfrac12(E_{25}-E_{34})$. Computing, we see that $[X,Y] = 0$ while
 $[X^{\m},Y^{\m}]  = -\tfrac12(E_{24}-E_{35})$. It follows from
Remark~\ref{RemNotInZ} that the triple $(H,K,G)$ does not satisfy
(\ref{condition}).

Finally we are left with the case where all root systems $R(\alpha,\beta)$,
$(\alpha,\beta) \in \PP$ are of type $\LB_2$, such that each $\beta$ is a long
root and all long roots in $R(\alpha,\beta)$ are contained in $R_K$ for each
$(\alpha, \beta) \in \PP$. Fix one such root system $R(\alpha_0,\beta_0)$. Let
$\g'$ be the simple ideal of~$\g$ which contains the vectors $X_{\beta_0},
Y_{\beta_0}$. Then $\g'$ is a regular subalgebra of~$\g$. We have to show that
$\g'$ is as described in~(\ref{sochain}) or in~(\ref{spchain}). Since
$R(\alpha_0,\beta_0)$ is of type~$\LB_2$, it follows that the simple factor
of~$G$ corresponding to $\g'$ is of type $\LB_n$, $\LC_n$, or $\LF_4$.

Now the statement of the theorem follows from Lemma~\ref{LmRoots}. Indeed, in
case $\g'$ is of type $\LB_n$, it follows from Lemma~\ref{LmRoots} that the
simple ideal $\g'$ is as in item~(\ref{sochain}) of the theorem. In case $\g'$
is of type $\LF_4$, it follows from the Lemma that $\g'$ is as described in
item~(\ref{sympair}) of the theorem. If $\g'$ is of type $\LC_n$, we use the
following counterexample to show that there can only be one simple ideal of $\k
\cap \g'$ which is not contained in~$\h$. Consider the chain of Lie algebras
$\{0\} \subset \sp(1) \oplus \sp(1) \subset \sp(2)$. It follows from
\cite[Lemma~2.2]{Sch} that a pair of vectors $X,Y$ with $[X,Y]=0$, while
$[X^\m,Y^\m]^\m = [X^\m,Y^\m] \neq 0$ exists. Indeed, define e.g.\
\begin{align*}
X = \left(
      \begin{array}{cc}
        j & j+1 \\
        j-1 & i-k \\
      \end{array}
    \right), \qquad
Y = \left(
      \begin{array}{cc}
        -i-k & i \\
        i & \frac i2+j-\frac k2 \\
      \end{array}
    \right)
\end{align*}
Then $[X,Y]=0$, while $[X^\m,Y^\m]^\m = [X^\m,Y^\m] \neq 0$. Using
Remark~\ref{RemPrimeTriple}, it follows that $\g'$ is as described in
item~(\ref{spchain}) of the theorem.
\end{proof}

To solve the remaining cases in the proof of the above theorem, we have to take
a closer look at the root systems of type $\LB_n$, $\LC_n$ and $\LF_4$.

\begin{lemma}\label{LmRoots}
Let $G$ be a simple compact Lie group of type $\LB_n$, $\LC_n$, or $\LF_4$. Let
$H \subsetneq K \subsetneq G$ be connected subgroups such that $\rk(H) = \rk(K)
= \rk(G)$ and such that $(K,H)$ is a Hermitian symmetric pair. Then one of the
following is true, where $R_\s$ and $R_\m$ are defined as in
Section~\ref{AClassEx}.
\begin{enumerate}

\item \label{SameL} There is pair of non-orthogonal roots $(\alpha,\beta)
    \in R_\s \times R_\m$ of the same length.

\item \label{LongShort} There is pair of non-orthogonal roots
    $(\alpha,\beta) \in R_\s \times R_\m$ such that $\alpha$ is a long root
    and $\beta$ is a short root.

\item \label{SoSo} The triple of Lie algebras $(\h,\k,\g)$ is isomorphic to
    \[(\u(n),\so(2n),\so(2n+1)).\]

\item \label{SpSp} The triple $(\h,\k,\g)$ is of the following form: $\g
    \cong \sp(n)$ and for each of the simple factors $\k_1, \ldots, \k_f$
    of $\k$ we have either $\k_j \subseteq \h$ or $\k_j \cong \sp(1)$ and
    $\k_j \cap \h \cong \u(1)$.

\item\label{FSpin} The triple $(\h,\k,\g)$ is isomorphic to
    $(\so(8),\so(9),\Lie (\LF_4))$.

\end{enumerate}
\end{lemma}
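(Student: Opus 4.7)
The plan is a case analysis on the Cartan type of $G$. In each of the three types $\LB_n$, $\LC_n$, $\LF_4$, I would use Borel--de~Siebenthal theory to enumerate the possible regular full-rank subalgebras $\k \subsetneq \g$, and for each such $\k$ enumerate the candidates for $\h$ using Proposition~\ref{PropHermitian} together with the classification of irreducible Hermitian symmetric pairs of equal rank. Writing $\k = \z(\k) \oplus \k_1 \oplus \cdots \oplus \k_s$, each simple factor $\k_j$ is either contained in $\h$ or contributes an irreducible Hermitian symmetric pair $(K_j, K_j \cap H)$, so that $R_\m$ splits according to the non-trivial blocks $\k_j$ while $R_\s$ consists of the roots of $G$ that either bridge two blocks of $\k$ or lie outside $\k$ altogether. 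The game is then to check, for each of the finitely many candidate triples, whether $R_\s \times R_\m$ contains a non-orthogonal pair of the shape required in (1) or (2), and to isolate exactly those triples that do not as the list (3)--(5).

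For type $\LC_n$ every non-trivial $\k_j$ is of the form $\sp(k_j)$ with Hermitian partner $\u(k_j)$. If some non-trivial $\k_j$ has $k_j \geq 2$, I would pick $a\neq b$ in block $j$ and $c$ in a different block --- which exists because $K \subsetneq G$ --- and observe that $\beta = e_a+e_b \in R_\m$ and $\alpha = e_a - e_c \in R_\s$ are non-orthogonal short roots, giving (1). Hence every non-trivial factor must be $\sp(1)$ with Hermitian part $\u(1)$, which is exactly case (4).

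For type $\LB_n$ the regular subalgebra takes the form $\so(2k+1) \oplus \bigoplus_i \so(2m_i)$. In the special triple $(\u(n),\so(2n),\so(2n+1))$ of (3) one has $R_\m = \{\pm(e_i+e_j)\}$ (all long in $B_n$) and $R_\s = \{\pm e_i\}$ (all short), so every non-orthogonal pair in $R_\s \times R_\m$ is short-long, the pattern excluded from both (1) and (2). In every other configuration one of two mechanisms produces the desired non-orthogonal pair: if a $\so(2m)$-block with $m\ge 2$ carries the Hermitian partner $\u(m)$, the long root $e_a+e_b \in R_\m$ inside that block is non-orthogonal to the long root $e_a - e_c \in R_\s$ bridging to another block, giving (1); if the $\so(2k+1)$-factor with $k\ge 1$ carries its Hermitian decomposition $\so(2k-1)\oplus\so(2)$, the short root $e_k \in R_\m$ is non-orthogonal to the long root $e_k + e_j \in R_\s$ for any $j$ outside that block, giving (2).

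For type $\LF_4$ I would work in the standard model with long roots $\pm e_i \pm e_j$ and short roots $\pm e_i$ together with the spinor roots $\tfrac12(\pm e_1 \pm e_2 \pm e_3 \pm e_4)$, enumerate the regular full-rank subalgebras $\so(9)$, $\sp(3)\oplus\sp(1)$, $\su(3)\oplus\su(3)$, $\sp(2)\oplus\sp(1)\oplus\sp(1)$ and the few remaining options from the extended Dynkin diagram, and for each $\k$ examine every Hermitian choice of $\h$. In every configuration but one, a non-orthogonal pair of the shape (1) or (2) appears; the single exception is the triple $(\so(8),\so(9),\Lie(\LF_4))$, which is case (5). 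The hard part will be this $\LF_4$ enumeration, because in $F_4$ the spinor roots and the short roots $\pm e_i$ all have the same length and are mutually non-orthogonal, so one must track carefully --- for each Hermitian decomposition of the $\so(9)$ block and of each of the other regular subalgebras --- whether $R_\m$ inherits enough short or long roots to realize (1) or (2), and isolate precisely the one triple that lands in (5).
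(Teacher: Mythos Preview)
Your overall strategy—split on the Cartan type of $G$, then on the structure of $\k$, then on the Hermitian choice of $\h$—is the same as the paper's, but your execution of the $\LB_n$ and $\LC_n$ cases has genuine gaps.

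For type $\LC_n$, your claim that ``every non-trivial $\k_j$ is of the form $\sp(k_j)$'' is false. Regular full-rank subalgebras of $\sp(n)$ can have simple factors of type~$A$ as well: already $\k=\u(n)\subset\sp(n)$ has simple part $\su(n)$, and iterating Borel--de~Siebenthal one obtains arbitrary mixtures of $\sp$- and $\su$-blocks. You never treat the case where some non-trivial $\k_j\cong\su(m)$, $m\ge 2$, carries a Grassmannian Hermitian partner $\mathfrak s(\u(p)\oplus\u(q))$; your construction $\beta=e_a+e_b$ is then unavailable, since such a root does not lie in $\k_j$. For type $\LB_n$, you only allow the Hermitian partner $\u(m)$ for a $\so(2m)$-block, but $\so(2)\oplus\so(2m-2)$ is also a Hermitian partner and is not covered by either of your two ``mechanisms''. (Moreover, your description of regular $\k\subset\so(2n+1)$ as ``$\so(2k+1)\oplus\bigoplus_i\so(2m_i)$'' is already incomplete, since $\u(m)\subset\so(2m)$ is regular and produces $\su$-type factors.)

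These missing cases are all easy to fill, but the paper bypasses them with a simpler device that you should adopt: rather than enumerating Hermitian pairs, embed $\k$ in a \emph{maximal} regular subalgebra $\k'\subsetneq\g$, so that $R_{\s'}\subseteq R_\s$ and $R_\m\subseteq R_{K'}$. One then picks an arbitrary $\beta\in R_\m$ and, by direct inspection of the explicit sets $R_{K'}$ and $R_{\s'}$, exhibits a non-orthogonal $\alpha\in R_{\s'}$ of the required length. This argument is insensitive to the Hermitian structure of $(\k,\h)$ and to the finer block decomposition of $\k$, so the case analysis collapses to a handful of maximal $\k'$ in each type; the exceptional outcomes~(\ref{SoSo}), (\ref{SpSp}), (\ref{FSpin}) then fall out as the configurations in which the only available non-orthogonal pairs are short--long with $\alpha$ short. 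The same trick also streamlines your $\LF_4$ enumeration.
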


\begin{proof}
Assume first $G$ is of type $\LB_n$, i.e.\ $\g \cong \so(2n+1)$. The Lie
algebra $\k$ is contained in a maximal subalgebra~$\k'$ of full rank of $\g$.
This maximal subalgebra $\k'$ is conjugate to $\so(2\ell) \oplus
\so(2n-2\ell+1)$, where $\ell =1,\ldots,n$, \cite[Thm.~16, \S3]{Oniscik}. Using
the notation of \cite[Prop.~6.5, Ch.~V]{BtD}, the long roots of~$G$ are given
by $\pm \vartheta_\mu \pm \vartheta_\nu$ where $1 \le \mu<\nu \le n$, while the
short roots are $\pm \vartheta_\nu$ where $1 \le \nu \le n$. Then we may assume
\[
R_{K'} = \{ \pm \vartheta_\mu \pm \vartheta_\nu \mid 1 \le \mu < \nu \le \ell
\vee \ell < \mu < \nu \le n \} \cup \{ \pm \vartheta_\nu  \mid \ell < \nu \le n
\}.
\]
and we have for the roots $R_{\s'} \subseteq R_\s$, corresponding to the
orthogonal complement $\s'$ of $\k'$ in $\g$,
\[
R_{\s'} = \{ \pm \vartheta_\mu \pm \vartheta_\nu \mid 1 \le \mu \le \ell < \nu
\le n \} \cup \{ \pm \vartheta_\nu  \mid 1 \le \nu \le \ell \}.
\]
We may assume $\ell < n$ since otherwise (\ref{SoSo}) holds. Since $H \neq K$,
there is an element $\beta \in R_\m \subseteq R_{K'}$. If $\beta$ is a long
root, say $\beta = \pm \vartheta_\mu \pm \vartheta_\nu$, then we may choose an
element $\alpha = \pm \vartheta_\kappa \pm \vartheta_\lambda \in R_{\s'}
\subseteq R_\s$ where $\kappa=\mu$ or $\kappa=\nu$ and (\ref{SameL}) holds. If
$\beta$ is a short root $\pm \vartheta_\nu$ for some $\ell < \nu \le n$, then
choose $\alpha =  \vartheta_1 + \vartheta_\nu$ to show that (\ref{LongShort})
holds.

Now assume $G$ is of type $\LC_n$, i.e.\ $\g \cong \sp(n)$, $n \ge 3$. The Lie
algebra $\k$ is again contained in maximal subalgebra~$\k'$ of full rank of
$\g$. The maximal subalgebras $\k'$ of full rank in $\sp(n)$ are conjugate to
$\u(n)$, or $\sp(\ell) \oplus \sp(n-\ell)$, where $\ell =1,\ldots,\lfloor \frac
n2 \rfloor$, see \cite[Thm.~16, \S3]{Oniscik}. It follows by induction that all
simple factors of $K$ are of type $\LA_k$ or $\LC_k$, $k < n$. Now, using the
notation from \cite[Prop.~6.6, Ch.~V]{BtD}, the short roots of~$G$ are $\pm
\vartheta_\mu \pm \vartheta_\nu$ where $1 \le \mu<\nu \le n$ and the long roots
are $\pm 2 \vartheta_\nu$ where $1 \le \nu \le n$. Assume first $\k$ is
contained in a maximal subalgebra of~$\g$ conjugate to $\u(n)$. Then we may
assume
\[
R_{K'} = \{\vartheta_\mu - \vartheta_\nu \mid 1 < \nu, \mu \le n,\; \mu \neq
\nu \}.
\]
and we have for the roots $R_{\s'} \subseteq R_\s$, corresponding to the
orthogonal complement $\s'$ of $\k'$ in $\g$,
\[
R_{\s'} = \{ \pm (\vartheta_\mu + \vartheta_\nu) \mid 1 \le \mu < \nu \le n, \}
\cup \{ \pm 2\vartheta_\nu  \mid 1 \le \nu \le n \}.
\]
Since $H \neq K$, there is an element $\beta \in R_\m \subseteq R_{K'}$.
Obviously, there is some element $\alpha \in R_{\s'}$ which is not orthogonal
to~$\beta$. Since $R_{K'}$ contains only short roots, either (\ref{SameL}) or
(\ref{LongShort}) holds.

Assume now $\k$ is not conjugate to a subalgebra of~$\u(n)$, hence contained in
a maximal subalgebra of~$\g$ conjugate to $\sp(\ell) \oplus \sp(n-\ell)$.  We
may assume
\[
R_{K'} = \{ \pm \vartheta_\mu \pm \vartheta_\nu \mid 1 \le \mu < \nu \le \ell
\vee \ell < \mu < \nu \le n \} \cup \{ \pm 2\vartheta_\nu  \mid 1 \le \nu \le n
\}.
\]
and we have for the roots $R_{\s'} \subseteq R_\s$, corresponding to the
orthogonal complement $\s'$ of $\k'$ in $\g$,
\[
R_{\s'} = \{ \pm \vartheta_\mu \pm \vartheta_\nu \mid 1 \le \mu \le \ell < \nu
\le n \}.
\]
Assume that (\ref{SpSp}) does not hold. Then there is a simple ideal $\k_j$
of~$\k$ which is either isomorphic to $\u(q)$ for some $q \in \{1, \ldots,
n-1\}$ or isomorphic to some $\sp(q)$ for some $q \in \{2, \ldots, n-1\}$ and
such that $\k_j \cap \h \neq \k_j$. It follows from \cite[\S3]{Oniscik} that
then $R_\m$ contains a short root. But for any root~$\gamma$ the set $R_{\s'}$
contains a (short) root which is not orthogonal to~$\gamma$. Thus (\ref{SameL})
holds.

Now assume $G$ is of type $\LF_4$. The long roots are $\pm \vartheta_\mu \pm
\vartheta_\nu$, $1 \le \mu < \nu \le 4$ and the short roots are given by either
$\pm \vartheta_\mu$, $1 \le \mu \le 4$ or $\frac12 (\pm \vartheta_1 \pm
\vartheta_2 \pm \vartheta_3 \pm \vartheta_4)$, see \cite{bourbaki} or
\cite{Oniscik}. The maximal subgroups of maximal rank in $\LF_4$ are
$\Spin(9)$, $\Sp(3) \cdot \Sp(1)$, and $\SU(3) \cdot \SU(3)$, see
\cite[Thm.~16, \S3]{Oniscik}.

Assume first that $K$ is conjugate to a subgroup of $\SU(3) \cdot \SU(3)$. We
deduce from \cite[Thm.~16, \S3]{Oniscik} that the root system corresponding to
this subgroup consists of six short roots and six long roots. The 24 long roots
from $R$ comprise a subsystem of type $\LD_4$. Thus for any element $\gamma$ in
$R$ there is a long root in $R_{\s'}$ which is non-orthogonal to $\gamma$.
Hence (\ref{LongShort}) holds.

Now assume $K$ is conjugate to a subalgebra of $\Sp(3) \cdot \Sp(1)$. The root
system of this group consists of 12~short and 8~long roots. These 8~long roots
correspond to a subalgebra of type $4 \cdot \sp(1) \cong \so(4) \oplus \so(4)
\subset \so(8)$. A similar argument as above shows that (\ref{LongShort})
holds.

Finally, assume $K$ is conjugate to a subgroup of $\Spin(9)$. We have
\[
R_{K'} = \{ \pm \vartheta_\mu \pm \vartheta_\nu \mid 1 \le \mu < \nu \le 4\}
\cup \{ \pm \vartheta_\mu \mid 1 \le \mu \le 4 \}.
\]
If $K$ is any maximal connected subgroup of maximal rank in $\Spin(9)$ then
(\ref{LongShort}) holds, cf.\ \cite[Thm.~16, \S3]{Oniscik}. Thus we may assume
$K = \Spin(9)$. Note that for every root~$\gamma$ in $R$ there is a (short)
root in the set $R_{\s'} = \{ \textstyle \frac12 (\pm \vartheta_1 \pm
\vartheta_2 \pm \vartheta_3 \pm \vartheta_4) \} $ which is not orthogonal
to~$\gamma$. Thus if (\ref{SameL}) does not hold then $R_\m$ consists
exclusively of long roots. It follows that $H = \Spin(8)$.
\end{proof}

\begin{remark}\label{ThmSpChain}
For items (1), (2) (3) and (5) in Theorem~\ref{ThmRegular} we know that
condition~(\ref{condition}) holds for the chains $(\h \cap \g_i, \k \cap \g_i,
\g_i)$. However, we conjecture that condition~(\ref{condition}) holds also for
each chain of regular subgroups $(H,K,G) = (\Sp(1)^{n-1}, \Sp(1)^n, \Sp(n))$
with $n \ge 2$. If the conjecture is true, it follows from
Proposition~\ref{PropRegular} that the statement in Theorem~\ref{ThmRegular}
can be improved to ``if and only if''. To our knowledge, there are no known
examples of chains $(H,K,G)$ satisfying condition~(\ref{condition}) which
contain non-regular subgroups, cf.~\cite{kk}.
\end{remark}


\end{document}